\definecolor{lstbgcolor}{rgb}{0.9,0.9,0.9}
\newcommand{\circdist}{1}  % distance from origin to center of circles
\newcommand{\circrad}{7/4} % radius of the circles
\newcommand{\circlethickness}{2mm} % uh, thickness of the circles
\pgfmathsetmacro{\intrad}{sqrt((\circrad)^2 - 3*(\circdist)^2/4) - \circdist/2}
\pgfmathsetmacro{\extrad}{sqrt((\circrad)^2 - 3*(\circdist)^2/4) + \circdist/2}
\colorlet{180}{blue}
\colorlet{60}{red}
\colorlet{300}{green}
\newcommand{\mycircle}[1]{%
  \draw[thick, double distance=\circlethickness, double=#1]
  (#1:\circdist) circle (\circrad);}
\title{Topological invariants of sorting networks.}
\author{Maxim Arnold \and Christian Kondor}
\email{maxim.arnold@utdallas.edu, christian.kondor@utdallas.edu }
\address{The University of Texas at Dallas. 800 W. Campbell Rd., Richardson, TX 75080}
\newtheorem{thm}{Theorem}
\newtheorem{prp}{Proposition}
\newtheorem{lm}{Lemma}
\theoremstyle{definition}
\newtheorem{dfn}{Definition}
\newtheorem{rem}{Remark}
\newtheorem{qst}{Question}
\newtheorem{exm}{Example}
\newcommand{\sort}{\mathcal{S}}
\newcommand{\asb}{\mathfrak{A}}
\newcommand{\dsb}{\mathfrak{D}}
\newcommand{\lk}{\mathsf{lk}}
\newcommand{\R}{\mathbb{R}}
\newcommand{\symmetricn}{\mathfrak{S}_n}
\newcommand{\symmetric}{\mathfrak{S}}
\begin{document}
\maketitle

\begin{abstract}
	In this note we investigate finite-type Vassiliev invariants of the pure braids arising from signed sorting networks.
	
	\iffalse
	In this paper we attempt to provide a topological framework for 
	considering combinatorial sorting networks. We state preliminary, 
	well-known results about sorting networks and establish their 
	correspondence to reduced words for the long element of the symmetric 
	group. This motivates the introduction of the $n$-permutahedron, on 
	which sorting networks correspond to paths from the identity to the long 
	element. We then describe how a \emph{sorting braid} may be 
	constructed by concatenating two networks $S$ and $T^*$ (the reverse of 
	$T$) and assigning the new word a signature. In particular, we provide a 
	method of finding signatures that guarantee all strands of the 
	corresponding braid are pairwise unlinked. We prove for such braids that 
	if the sum of squared $\mu_3$-invariants across all triples is zero, the 
	braid is trivial. In light of this result, we provide several conjectures about 
	the relationship between these sorting braids and the permutahedron and 
	comment on a possible relationship to the Bruhat order.
	\fi
\end{abstract}

\section{Introduction}
% % % % Completely edited by CK, June 26
For a given $n$, a sorting network on $n$ elements is a representation of the reverse permutation
$$\begin{pmatrix}
1&\dots&n\\
n&\dots&1
\end{pmatrix}$$
as a product of elementary transpositions ${\bf j}:=(j,j+1)$, $j=1,\dots,(n-1)$. This product must be of minimal length, being $\binom{n}{2}$ transpositions in length. We will denote this ``length of a sorting network'' by $N$.

Sorting networks have been extensively studied in the past few decades from the viewpoints of both computer science and pure mathematics. These studies have been motivated by sorting networks' rich combinatorial structure and wide range of applications (see e.g. \cite{Romik2007}, \cite{Virag2018} and references therein). One such application is to view a sorting network as a discretization of the generalized $1$-dimensional Euler equation (see for example \cite{Brennier}). In this view, physical space is discretized to a finite number $n$ of particles, the incompressibility condition is interpreted as the conservation of this $n$, and the action to be minimized is the squared discrete velocity \cite{Virag2016}. Our aim in this paper is to investigate asymptotic topological invariants of the trajectories in this limited case. We hope that our study will add to the discussion regarding the question posed by Arnold \cite{Arnold}.

\section{Basic Definitions}
% % % % Completely edited by CJK, June 26
This paper concerns the symmetric group $\symmetric_n$ on $n$ elements, since $\symmetricn$ is generated by the set $\{{\bf j}:=(j,j+1), \quad j=1,\dots,n-1\}$ of elementary transpositions. We will often refer to $\symmetricn$ in terms of its Cayley graph, the $n$-permutahedron. To construct the $n$-permutahedron, consider $\mathbf{x} = (x_1, \cdots, x_j, \cdots, x_n)$ and a central arrangement of $(n-1)$-dimensional hyperplanes $p_{j,k}:x_j=x_{k}$ in $\R^n$. These hyperplanes split the whole of $\R^n$ into $n!$ disjoint domains. If from one of these domains we choose the base point $\mathbf{b} = (1, 2, \cdots, n)$, we can associate each domain to the image of $\mathbf{b}$ under reflections through the corresponding hyperplanes. Letting these images be vertices and representing hyperplane reflections as edges connecting the images, we construct the graph known as the $n$-permutahedron. Labelling the vertices of the $n$-permutahedron with the inverse permutations coming from the components of the vertex provide a labelling on the edges. This edge labelling corresponds to the generators $\bf j$. For example, a vertex with coordinates $(3,1,2)$ would be labelled with the permutation $$\begin{pmatrix}1&2&3\\2&3&1\end{pmatrix}=(2,3)(1,2).$$
% % % % I just put this in display mode because I felt like it looked nicer--can easily be changed back if necessary. --CJK

\iffalse
{MA: \bf   Labeling the vertices of the resulting graph with the inverse permutations coming from the components of the vertex provide a labeling on the edges corresponding to the generators $\bf j$. For example, vertex with coordinates $(3,1,2)$ will be labeled with the permutation $$\begin{pmatrix}1&2&3\\2&3&1\end{pmatrix}=(2,3)(1,2)$$.}

\fi

Sorting networks correspond to paths on the $n$-permutahedron. In particular, a sorting network on $n$ elements corresponds to a minimal-length path on the $n$-permutahedron that connects the identity permutation to the so-called ``long element'' of the symmetric group, as in figure \ref{fig:1B}.

\begin{figure}[hbt]
	\includegraphics[width=0.85\textwidth]{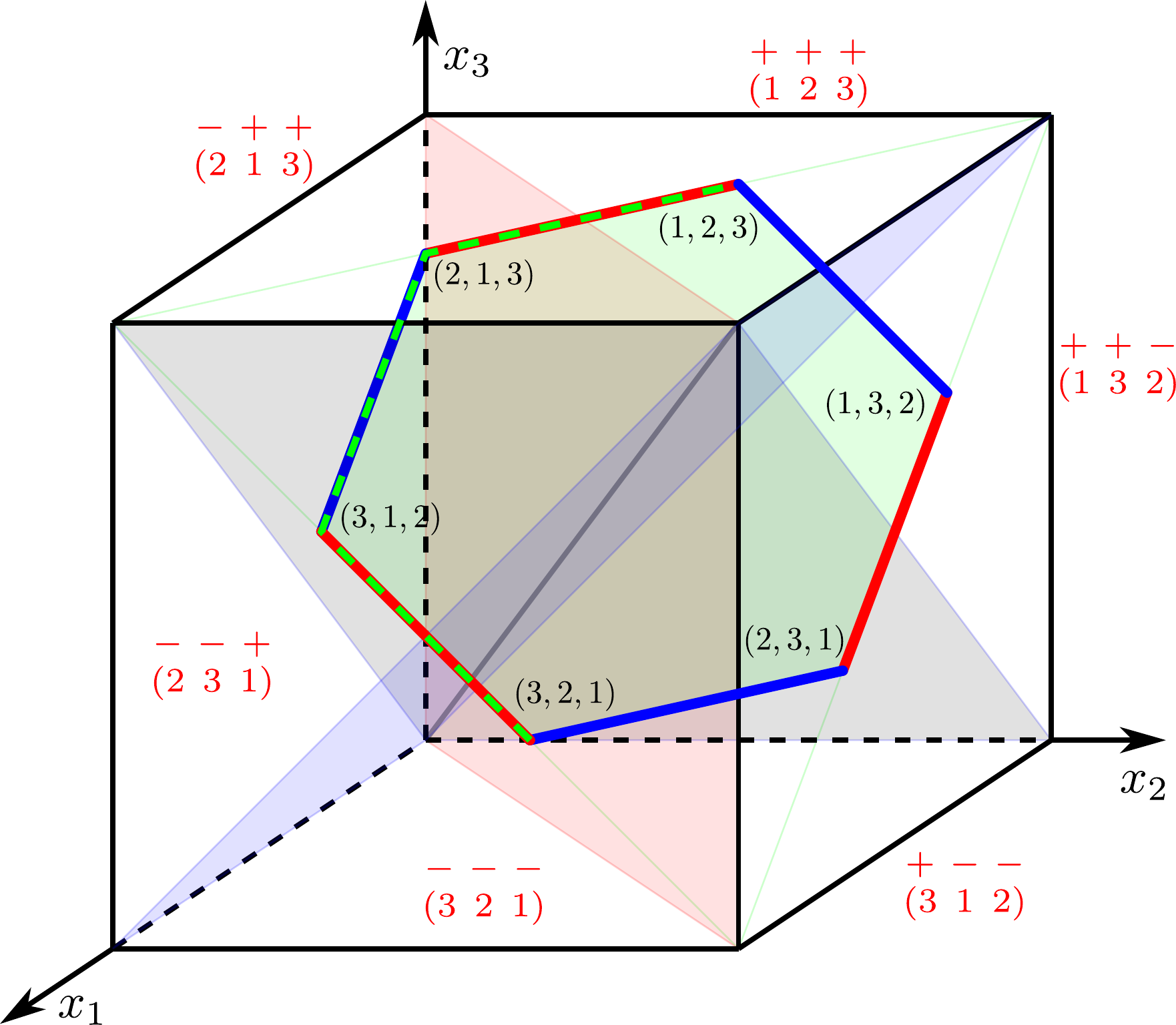}
	\caption{Permutahedron for $\symmetric(3)$. Planes $p_{1,2}$, $p_{2,3}$ and $p_{1,3}$ form central hyperplane arrangement and split $\mathbb{R}^3$ into six chambers. Every chamber corresponds to one of the orderings of components $(x_1,x_2,x_3)$. The sorting network $\bf 121$ corresponds to the path from the chamber $(+++):=\{x_2>x_1,\, x_3>x_1,\, x_3>x_2\}$ to the chamber $(---):=\{x_2<x_1,\,x_3<x_1,\,x_3<x_2\}$.}\label{fig:1B}
 \end{figure}

This, however, is not the only geometric representation of sorting networks. To see another representation consider the sorting network $S={\bf j}_1\cdots{\bf j}_N$ acting on $n$ elements, where $N = \binom{n}{2}$ as before. We can correspond this network to the trajectories of $n$ particles in a $1$-dimensional dynamical system during finite time $N$. %Consider a partition of the time interval $(0, N)$, the set $\{k \,|\, 1\leq k \leq N\}$. For each $k$, the arrangement of the particles at time $k$ is given by the permutation $s_k:={\bf j}_1\cdots{\bf j}_k$. 
The trajectory of the $i$-th particle, then, is the the sequence $s_k^{-1}(i)$ of locations of the letter $i$ in the configuration $s_k={\bf j}_1\cdots {\bf j}_k$. The set of all trajectories taken together forms the \textit{wiring diagram} of the sorting network\footnote{Throughout the paper we will denote the sorting network as a sequence of elementary transpositions $\mathbf{j}_k$ written in the order of wiring diagram, which is opposite to the order of the factors if we write the corresponding product. For example, the product of transpositions $(2,3)(1,2)$, providing the permutation $(2,3,1)$ would be denoted as $\bf 12$.}.

This wiring diagram lends itself to further geometric constructions. If we assign a signature $+$ or $-$ to each transposition (or ``crossing'') in a wiring diagram, we can construct a braid from a sorting network. This braid construction can be taken a step further: if we consider two sorting networks $S={\bf j}_1\cdots {\bf j}_N$ and  $T={\bf k}_1\cdots {\bf k}_N$, the word $ST={\bf j}_1\cdots {\bf j}_N{\bf k}_1\cdots {\bf k}_N$ 
corresponds to a closed loop on the permutahedron tracing a path from the identity to the long element along the word $S$ and then returning back to the identity along the word $T$.  We will call such a loop a \emph{sorting loop}. If as before we assign signatures to each crossing in the wiring diagram for $TS$, we create a pure braid on $n$ elements as in figure \ref{fig:sorting_braid}.

\subsection{The Free Group on Strands, Finite-Type Invariants.}
Constructing pure braids from sorting networks lends us a number of new tools for analyzing sorting networks and distinguishing them from one another. One subset of these tools is the finite-type invariants, which may be used to classify braids, knots, and links (see \cite{PolyakViro} and \cite{Braids}). An invariant of particular interest is the \textit{Milnor number}, sometimes referred to as the generalized linking number of a braid.

We can take the closure of a pure braid -- that is, connecting the ``starting'' and ``terminal'' points of each strand -- to form a closed link of $n$ loops embedded in $\R^3$. If all of the loops in such a link can be separated without any cuts, the link and its corresponding pure braid are called trivial. Two loops, however, may be interlinked. This interlinking results from two strands of a braid being wrapped around one another. How interlinked two strands $i$ and $j$ are can be quantified by their linking number $\lk(i, j)$, a well-known invariant of braids. The number $\lk(i,j)$ is the number of times that $i$ crosses over $j$ minus the number of times $j$ crosses over $i$.

\begin{figure}[!hbt]\centering
% % % % Begin Creative Commons code:
		\begin{tikzpicture}[scale=.75]
  % draw the circles
  \foreach \angle in {180, 60, 300}
  {
    \mycircle{\angle}
  }

  % we need to re-do two of the intersections so that the rings
  % interlock and aren't just piled on top of one another
  \foreach \angle/\rad in {60/\intrad, 240/\extrad}
  {
    \begin{scope}
      \clip (\angle:\rad) circle (10/4*\circlethickness);
      %                          you may need to adjust the 5/4 above so
      %                          the clipping area is big enough
      \mycircle{300}
      \mycircle{180}
    \end{scope}
  }
\end{tikzpicture}
	\caption{The Borromean rings, a non-trivial 3-link with all pairwise linking numbers equal to zero.}\label{fig:borromean}
%	\label{fig:permutahedron}
% % % % End Creative Commons code
\end{figure}
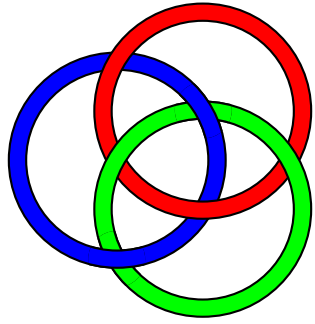

Intuition might lead us to believe that if the vector $\{\lk(i,j)\}\in \mathbb{Z}^N$ of the linking numbers of each pair of trajectories is the zero vector, then the braid is trivial, but this is certainly not the case! The most famous example of a non-trivial $3$-link having all pairwise linking numbers equal to zero is the Borromean rings, shown in figure \ref{fig:borromean}. Thankfully, we can generalize the notion of a linking number to describe the entanglement of larger groups of strands and detect finer structures. The generalization to $m$ strands is known as the \textit{$m$-th Milnor number}, defined as follows:

\begin{dfn}[see e.g. \cite{Braids}] The $m$-th Milnor number or \emph{generalized linking number} is defined for the pure braid $\beta$ on $m$ strands $\{p_1,\dots,p_m\}$. Let $w$ be a word corresponding to the braid $\beta$ in the free group presentation. Then $w$ can be written as $S_{p_m} p_m S_{p_m}^{-1}$. For any subsequence $d=t_{j_1}^{\epsilon_{j_1}}\cdots t_{j_{m-1}}^{\epsilon_{j_{m-1}}}$ inside $S_{p_m}$ such that $j_1<\dots<j_{m-1}$ and $t_{j_1}=p_1$, $\dots, t_{j_{m-1}}=p_{m-1}$, define the sign of the subsequence as $\mathsf{sgn}(d)=\epsilon_{j_1}\cdots \epsilon_{j_{m-1}}$. Then the $m$-th Milnor number for the braid $\beta$ is defined as the sum of $\mathsf{sgn}(d)$ over all sub-sequences of $S_{j_m}$.
	%$$\mu_m=$$\footnote{Give precise definition.}
\end{dfn}
In particular, $\mu_2(i,j)=\lk(i,j)$.
Informally, $\mu_m$ detects whether a set of loops forms a Brunnian link, a structure in which $m$ loops are linked, such that any $(m-1)$ of them are unlinked. For example, for the Borromean rings, $\mu_2(1,2)=\mu_2(2,3)=\mu_2(1,3)=0$ and $\mu_3(1,2,3)=1$.

\section{The Case of $n=3$.}\label{sec:3}
% % % % Completely edited by CJK, June 27
Consider the case of $n=3$ particles. The symmetric group $\symmetric_3$ has $6$ elements as illustrated in Fig. \ref{fig:1B}. The Cayley graph of $\symmetric_3$ is therefore a regular hexagon contained by the plane $x_1+x_2+x_3=1+2+3$. On this graph, the set of the reduced words for the long element is generated by ${\bf 1}=(1,2)$ and ${\bf 2}=(2,3)$, corresponding to reflections in the planes $p_{1,2}:\, x_1-x_2=0$ and $p_{2,3}:\, x_2-x_3=0$, respectively. These two transpositions generate exactly two different sorting networks: $S_1={\bf 121}$ and $S_2={\bf 212}$, connecting the identity element (corresponding to the vertex $(123)$\,) to the longest element (corresponding to the vertex $(321)$\,). 

Up to the obvious symmetries, we can construct two different sorting loops: $L_1=S_1S_2$ and $L_2=S_1S_1$, each having $2^6 = 64$ possible signatures. In a given loop, denote the first and last crossings of the $i$-th and $j$-th particles by $f(i,j)$ and $\ell(i,j)$. For the loop $L_2$ we have $\ell(i,j)+f(i,j)=7$, while for the loop $L_1$ we have that $\ell(i,j)-f(i,j)=3$. The linking number of the $i$-th and $j$-th strands only depends on the signatures of $f(i,j)$ and $\ell(i,j)$; in particular, $|\lk(i,j)|=\frac{1}{2}|\sigma_{f(i,j)}+\sigma_{\ell(i,j)}|$. It follows that out of $64$ possible signatures, only $8$ provide a braid with the vector $\vec \lk = \vec 0$. For the case of $n=3$, the vector $\vec\mu_3$ consists of one number: $\mu_3(1,2,3)$. We will now consider the two loops $L_1$ and $L_2$ separately.  

\subsection{\textbf{Sorting loop} $L_1$}
Out of $8$ possible signatures for the first half of the sorting loop, precisely $6$ provide $\mu_3(1,2,3) = 0$. The two signatures which provide a non-trivial braid are $(+-+)$ and $(-+-)$ (see Fig. \ref{Fig:2B}. The explanation for this lies in the inversion sets corresponding to each permutation.

Recall the definition of the inversion set:

\begin{dfn}
    For any permutation $p\in\symmetricn$ we can associate the \textit{inversion set}, a vector $\mathsf{inv}(p)\in \{\pm 1\}^N$ where $$\mathsf{inv}(p)_ {i,j}%{i,(n-1)i+j}
    =\begin{cases}
\phantom{-}1&\mbox{ if } p(i)>p(j)\\
-1&\mbox{ if } p(i)<p(j)
\end{cases}.$$
\end{dfn}

Each permutation is uniquely determined by its inversion set (see Fig.\ref{fig:1B}). A permutation $p$ can be thought of as a new total ordering on the set of $n$ elements. Because of this, not all of the $2^N$ possible signatures will occur as an inversion set of some permutation, but only the $n!$ of these signatures corresponding to the $p$-determined order relation, $>_p$.

A total order such as the one we have defined must satisfy transitivity property: i.e., from $a>_p b>_p c$ it should follow that $a>_p c$. The signatures $(+-+)$ and $(-+-)$ listed above are precisely those which violate transitivity. Any other signature corresponds to a well-defined total order relation on the strands; for example the signature $(+--)$ corresponds to the ordering $3>1>2$ as shown in Fig. \ref{fig:1B}. Using this signature, we could draw the wiring diagram of $L_1$ on three parallel planes, each containing exactly one strand with no intersections.

In general for the signature given by the inversion set it is possible to establish a total ordering on the set of strands. The total ordering implies the  parallel planes construction from our example, and so the resulting $n!$ sorting braids are trivial. The two signatures $(+-+)$ and $(-+-)$ do not belong to the inversion set and so do not provide a complete ordering on the strands. In fact, the closures of the pure braids resulting from these signatures both create the Borromean rings (see Fig. \ref{Fig:2B}). Later on we will call such loop to be of \emph{type I}.

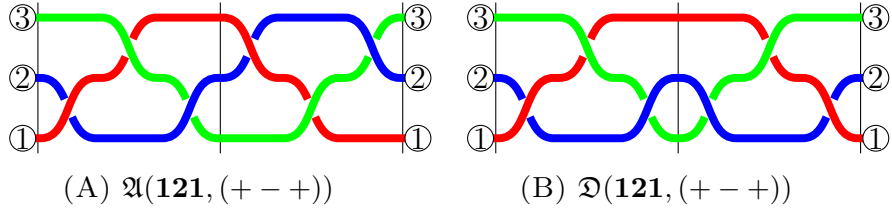
\begin{figure}[!hbt]\centering
\begin{subfigure}[b]{0.4\textwidth}
		\begin{tikzpicture}[scale=0.8]
		\braid[
		rotate=90,
		line width=3pt,
		floor command={%
			% rectangle (\floorex,\floorey);
			\draw (\floorsx,\floorsy) -- (\floorex,\floorsy);
		},
		style strands={1}{red},
		style strands={2}{blue},
		style strands={3}{green}
		] (braid) at (0,0) |s_1 s_2^{-1} s_1|s_2^{-1} s_1 s_2^{-1}|;
		\node[circle, draw, fill=white, inner sep=0 pt,at=(braid-3-e)] {\(3\)};
		\node[circle, draw, fill=white, inner sep=0 pt,at=(braid-3-s)] {\(3\)};
		\node[circle, draw, fill=white, inner sep=0 pt,at=(braid-2-e)] {\(2\)};
		\node[circle, draw, fill=white, inner sep=0 pt,at=(braid-2-s)] {\(2\)};
		\node[circle, draw, fill=white, inner sep=0 pt,at=(braid-1-e)] {\(1\)};
		\node[circle, draw, fill=white, inner sep=0 pt,at=(braid-1-s)] {\(1\)};
		\end{tikzpicture}
		\caption{$\asb({\bf 121},{(+-+)})$}\label{Fig:2B}
	\end{subfigure}
	\qquad
	\begin{subfigure}[b]{0.4\textwidth}
		\begin{tikzpicture}[scale=0.8]
		\braid[
		rotate=90,
		line width=3pt,
		floor command={%
			% rectangle (\floorex,\floorey);
			\draw (\floorsx,\floorsy) -- (\floorex,\floorsy);
		},
		style strands={1}{red},
		style strands={2}{blue},
		style strands={3}{green}
		] (braid) at (0,0) |s_1 s_2^{-1} s_1|s_1^{-1} s_2 s_1^{-1}|;
		\node[circle, draw, fill=white, inner sep=0 pt,at=(braid-3-e)] {\(3\)};
		\node[circle, draw, fill=white, inner sep=0 pt,at=(braid-3-s)] {\(3\)};
		\node[circle, draw, fill=white, inner sep=0 pt,at=(braid-2-e)] {\(2\)};
		\node[circle, draw, fill=white, inner sep=0 pt,at=(braid-2-s)] {\(2\)};
		\node[circle, draw, fill=white, inner sep=0 pt,at=(braid-1-e)] {\(1\)};
		\node[circle, draw, fill=white, inner sep=0 pt,at=(braid-1-s)] {\(1\)};
		\end{tikzpicture}
		\caption{$\dsb({\bf 121},{(+-+)})$}\label{Fig:2A}
		%={\bf 1\bar{2}1\bar{ 1} 2\bar{ 1}}$
	\end{subfigure}
	\caption{Sorting braids with zero vector of linking numbers.}\label{fig:sorting_braid}
\end{figure}

\subsection{\bf Sorting loop $L_2$.}
Our argument for $L_1$, it turns out, is independent of the sorting loop. The six signatures we mentioned above for $L_1$ also provide trivial braids for $L_2$. The two remaining signatures, however, act quite differently--they lead to trivial braids as well (see Fig. \ref{Fig:2A})! The reason for triviality is much simpler: the word $\mathbf{1\bar{2}1\bar{1}2\bar{1}}=\mathbf{1\bar{2}2\bar{1}}=\mathbf{1\bar{1}}=e$ reduces to the trivial one. 
Later we will refer to such loops as being of \emph{Type II.}

\section{The General Case.}
% % % % Editing in progress by CJK, June 27
Now armed with some experience in the case of $n=3$, we proceed to examine the general case for $n$ particles. Recall the notation $N=\binom{n}{2}=\frac{n(n-1)}{2}$.

\subsection{The Asymptotics of linking numbers.}
We begin with a general statement on the signatures for a given sorting loop $ST$:

\begin{thm}
    Out of all $2^{2N}$ signatures for a sorting loop $ST$, there are exactly $\binom{N}{k} 2^N$ signatures for which $|\vec\lk| = k$.
\end{thm}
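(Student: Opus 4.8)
The plan is to reduce the count to an independent, pair-by-pair analysis, exploiting the fact that in a sorting loop each pair of strands crosses \emph{exactly twice}. First I would record the combinatorial backbone. Since $S$ is a reduced word for the long element $w_0$, the wiring diagram of $S$ crosses each pair of strands $(i,j)$ exactly once, inverting that pair; after $S$ every pair is inverted. The second half $T$ is also a reduced word for $w_0$ and must drive the configuration from $w_0$ back to the identity, so it un-inverts every pair exactly once, again crossing each pair $(i,j)$ exactly once. Hence in the full loop $ST$ the strands $i$ and $j$ meet at precisely two crossings --- the crossing $f(i,j)$ inside $S$ and the crossing $\ell(i,j)$ inside $T$ --- and the $2N$ crossings of the loop partition into $N$ disjoint pairs, one pair of crossings for each of the $N$ pairs of strands.

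Next I would compute the linking number locally. Because strands $i$ and $j$ meet at exactly the two crossings $f(i,j)$ and $\ell(i,j)$, and closing the pure braid introduces no further crossings between distinct strands, the standard half-sum-of-signs formula gives $\lk(i,j) = \tfrac12\bigl(\sigma_{f(i,j)} + \sigma_{\ell(i,j)}\bigr)$, exactly as in the $n=3$ case. Thus $\lk(i,j) = \pm 1$ when the two signs agree and $\lk(i,j) = 0$ when they disagree, so every entry of $\vec\lk$ lies in $\{-1,0,1\}$ and $|\lk(i,j)| \in \{0,1\}$. Interpreting $|\vec\lk|$ as the number of nonzero entries (equivalently, the $\ell_1$-norm, which agrees here), the condition $|\vec\lk| = k$ says exactly $k$ of the $N$ strand-pairs are linked, i.e.\ have matching crossing signs.

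The count then factorizes cleanly. Assigning a signature to the loop is the same as independently choosing the ordered pair of signs $(\sigma_{f(i,j)}, \sigma_{\ell(i,j)}) \in \{\pm 1\}^2$ for each of the $N$ strand-pairs, which explains the total of $(2^2)^N = 2^{2N}$ signatures. For a fixed choice of which $k$ pairs are to be linked --- there are $\binom{N}{k}$ such choices --- each of the $k$ linked pairs admits the two matching patterns $(+,+)$ and $(-,-)$, while each of the remaining $N-k$ unlinked pairs admits the two mismatched patterns $(+,-)$ and $(-,+)$. This yields $2^k \cdot 2^{N-k} = 2^N$ signatures per choice, and summing over the $\binom{N}{k}$ choices gives $\binom{N}{k} 2^N$, as claimed. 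As a sanity check, taking $n=3$ (so $N=3$) and $k=0$ recovers the $\binom{3}{0} 2^3 = 8$ signatures with $\vec\lk = \vec0$ found earlier.

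The only genuinely load-bearing step is the first one: the claim that each pair of strands crosses exactly twice, once in $S$ and once in $T$. Everything afterward is a routine product count. I therefore expect the main obstacle to be pinning down this crossing structure rigorously --- in particular, transferring the fact that a reduced word for $w_0$ inverts each pair exactly once to the level of wiring-diagram crossings, and confirming that the return path $T$ un-inverts each pair exactly once. Once that bijection between crossings and inversions is secured, the linking-number formula and the factorized enumeration follow with no further difficulty.
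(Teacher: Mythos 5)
Your proposal is correct and follows essentially the same route as the paper: each pair of strands crosses exactly twice (once in $S$, once in $T$), $\lk(i,j)=\tfrac12(\sigma_{f(i,j)}+\sigma_{\ell(i,j)})$, and the count factors into $\binom{N}{k}$ choices of which pairs link times $2^N$ sign patterns. Your version is somewhat more careful than the paper's (which simply asserts the two-crossing structure and organizes the count as "fix the first half, then count completions of the second half"), but the underlying argument is identical.
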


\begin{proof}
    For any pair of particles $(p_1,p_2)$ there exist exactly two indices corresponding to the crossing of their trajectories, namely $f(p_1,p_2)\in [1,N]$  and $\ell(p_1,p_2)\in [N, 2N]$. It follows that $\lk(p_1,p_2)=\frac 12 (\sigma_{f}+\sigma_{\ell})$. Furthermore, $\lk(p_1,p_2)=0$ if $\sigma_{f}=-\sigma_{\ell}$ and has magnitude $1$ otherwise. This implies that for any choice of the signatures $\sigma_1,\dots,\sigma_N$ of the first half of the loop, there exist exactly $\binom{N}{k}$ ways we may choose signatures on the second part to provide a non-zero vector of linking numbers.
\end{proof}

This theorem has the interesting effect of splitting the set of all signatures with respect to the magnitude of the signature's corresponding $\vec\lk$: $2^{2N}=2^N\sum \limits_{k=0}^N \binom{N}{k}$.

We can also compute the average linking number for a sorting loop over all possible signatures. The computation follows:
\[
\mathbb{E}(\lk)=2^{-2N}\sum \limits_{k=0}^{N} k \binom{N}{k} 2^N=2^{-N} \sum \limits_{k=0}^{N} k \binom{N}{k}=2^{-N} N 2^{N-1}=\frac{N}{2}.
\]

\subsection{Third order Milnor invariant.}
As it turns out, we can make many general statements about the $\mu_3$ invariants corresponding to signatures of much larger sorting networks. We begin with a definition:

\begin{dfn}
	For the sorting network $S={\bf j}_{1} {\bf j}_2\cdots 
	{\bf j}_N$ define the \textit{conjugate sorting network} as $S^*={\bf j}_{N}{\bf j}_{N-1}\cdots 
	{\bf j}_1$. 
\end{dfn}

Conjugation of a sorting network is a well-defined involution. Intuitively, $S^*$ corresponds to the sorting network that describes the path $S$ on the permutahedron, but read back from the long element to the identity. We can extend our definition of conjugation to apply to signed sorting networks as well:
$$({\bf j}_{1}^{\sigma_1} {\bf j}_2^{\sigma_2}\cdots 
{\bf j}_N^{\sigma_N})^*:={\bf j}_{N}^{-\sigma_N} {\bf j}_{N-1}^{-\sigma_{N-1}}\cdots 
{\bf j}_1^{-\sigma_1}.$$

From our construction, we can immediately deduce the following lemma:

\begin{lm} \label{lm:conjugate}
	For any signed sorting network $S(\sigma)$, the sorting braid $SS^*$ is trivial.
\end{lm}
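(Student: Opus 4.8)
The plan is to show that the signed word $SS^*$ represents the trivial element of the pure braid group by exhibiting an explicit free reduction, exploiting the palindromic structure built into the definition of conjugation. Recall that for $S(\sigma)={\bf j}_1^{\sigma_1}\cdots{\bf j}_N^{\sigma_N}$ we have $S^*={\bf j}_N^{-\sigma_N}\cdots{\bf j}_1^{-\sigma_1}$, so the concatenation reads
\[
SS^*={\bf j}_1^{\sigma_1}\cdots{\bf j}_{N-1}^{\sigma_{N-1}}{\bf j}_N^{\sigma_N}{\bf j}_N^{-\sigma_N}{\bf j}_{N-1}^{-\sigma_{N-1}}\cdots{\bf j}_1^{-\sigma_1}.
\]
The central pair ${\bf j}_N^{\sigma_N}{\bf j}_N^{-\sigma_N}$ is a generator immediately followed by its inverse, which cancels in any group. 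The key observation is that after this innermost cancellation, the new central pair ${\bf j}_{N-1}^{\sigma_{N-1}}{\bf j}_{N-1}^{-\sigma_{N-1}}$ again consists of a generator adjacent to its inverse, and so on. I would therefore argue by induction on $N$: peeling off the matched outer pair ${\bf j}_1^{\sigma_1}(\cdots){\bf j}_1^{-\sigma_1}$ reduces the word $S(\sigma)S(\sigma)^*$ to $S'(\sigma')S'(\sigma')^*$ for the truncated network $S'={\bf j}_2^{\sigma_2}\cdots{\bf j}_N^{\sigma_N}$, and the base case $N=0$ (or $N=1$) is the empty word.

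The one point that requires genuine care, and which I expect to be the main obstacle, is the passage from the \emph{group}-theoretic identity $SS^*=e$ in the free group on generators to the claim that the corresponding \emph{braid} is trivial. The definition of a sorting braid passes through the free group presentation of the pure braid group, so I must be careful that the cancellations above are legitimate relations in that presentation and not merely formal. Concretely, I would note that each elementary crossing ${\bf j}_k^{\sigma_k}$ corresponds to a braid generator and that the reverse word $S^*$ is constructed precisely so that each crossing is retraced with the opposite sign; geometrically this is the statement that the wiring diagram of $SS^*$ is obtained by reflecting the diagram of $S$ across the horizontal midline, inverting every crossing, which is an isotopy-to-identity move (a finger-move / Reidemeister-II cancellation repeated from the middle outward).

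I would make this precise by working at the level of the wiring diagram itself rather than in an abstract group. At time $N$ the configuration has reached the long element $w_0$, and the second half $S^*$ undoes each crossing in reverse order. Reading from the middle of the braid outward, the innermost two strands cross and then immediately recross with opposite over/under data, so a Reidemeister-II move removes them; this exposes the next pair, which likewise cancels. After $N$ such moves the entire diagram is pulled straight, giving $n$ parallel unknotted strands, i.e. the trivial pure braid. Thus the formal free-group reduction and the diagrammatic isotopy are two views of the same induction.

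Finally, I would remark that the statement is essentially the braid-group analogue of the fact that $ww^{-1}$ is trivial, with $S^*$ playing the role of the inverse word; the content is simply verifying that the paper's sign convention for conjugation, namely negating every $\sigma_k$ together with reversing the order, is exactly the convention that realizes $S^*$ as the inverse of $S(\sigma)$ as a braid. Once that bookkeeping is checked, triviality of $SS^*$ follows immediately and the lemma is proved.
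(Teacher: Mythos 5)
Your proof is correct and takes essentially the same approach as the paper: the paper's own (one-line) proof simply observes that $S^*$ is by construction the exact inverse of the product ${\bf j}_1^{\sigma_1}\cdots{\bf j}_N^{\sigma_N}$, so $SS^*$ cancels to the identity, which is precisely your middle-outward cancellation. The subtlety you flag about passing from the formal reduction to braid triviality is not a real issue---cancellation of a generator against its inverse is valid in any group, the braid group included---so your Reidemeister-II elaboration, while a nice geometric picture, is not needed.
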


\begin{proof}
The word ${\bf j}_{1}^{\sigma_1} {\bf j}_2^{\sigma_2}\cdots 
{\bf j}_N^{\sigma_N}{\bf j}_{N}^{-\sigma_N} {\bf j}_{N-1}^{-\sigma_{N-1}}\cdots 
{\bf j}_1^{-\sigma_1}$ is trivial since the product ${\bf j}_{N}^{-\sigma_N} {\bf j}_{N-1}^{-\sigma_{N-1}}\cdots 
{\bf j}_1^{-\sigma_1}$ is exactly the inverse of the product ${\bf j}_{1}^{\sigma_1} {\bf j}_2^{\sigma_2}\cdots 
{\bf j}_N^{\sigma_N}$.
\end{proof}

\subsection{$\mu_3$ invariants.}
According to the theorem by Tits, any two sorting networks are connected in $\sort(n)$ by a sequence of commutations $\bf ij\sim ji$, $|i-j|>1$ and braid moves $\bf jij\mapsto iji$, $|i-j|=1$ (see e.g. \cite{Reiner2013} and references therein). From the sequence of commutations we can construct the following definition:

\begin{dfn}
	For two sorting networks $S, T \in \sort(n)$, define $\langle S,T\rangle$ to be the number of braid moves in the minimal sequence connecting $S$ and $T$. Note that $\langle S,T\rangle=\langle T,S\rangle$.
\end{dfn}

This definition allows us to connect the construction of signed braids to the $n$-permutahedron. Similarly to before, let $\vec\mu_3$ be the vector of $\mu_3$ invariants for a particular sorting braid.

\begin{thm}\label{thm:Bruhat2Mu}
	For $S,T \in \sort(n)$, the loop $ST$ corresponds to $2^{\langle S,T\rangle}$ unlinked braids for which $|\vec\mu_3| \neq 0$.
\end{thm}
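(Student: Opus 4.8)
## Proof Proposal

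The plan is to establish a correspondence between braid moves in the minimal sequence connecting $S$ and $T$ and the independent ``choices'' that can each be signed to produce a nontrivial $\mu_3$ invariant on some triple, while ensuring all linking numbers remain zero. The starting observation is the $n=3$ case already worked out: a single braid move $\mathbf{121}\mapsto\mathbf{212}$ is exactly what distinguishes the two sorting networks $S_1$ and $S_2$, and it is precisely the signatures $(+-+)$ and $(-+-)$ across such a triple that violate transitivity of the induced order and yield the Borromean configuration with $\mu_3\neq 0$. My intention is to show that in the general sorting loop $ST$, each of the $\langle S,T\rangle$ braid moves in the minimal Tits sequence localizes to a triple of strands $(i-1,i,i+1)$ where a local $\mathbf{121}$-versus-$\mathbf{212}$ discrepancy occurs, and that this triple can be independently assigned one of two ``type I'' signatures $(+-+)$ or $(-+-)$ to force $\mu_3\neq 0$ on that triple.

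First I would make precise the claim that commutation moves $\mathbf{ij}\sim\mathbf{ji}$ with $|i-j|>1$ contribute nothing: such moves involve disjoint pairs of strands, so they neither change any $\mu_3$ value nor the underlying order relation, and hence can be factored out, leaving only the $\langle S,T\rangle$ braid moves as the source of nontriviality. Next I would reduce each braid move to its local three-strand picture and invoke the $n=3$ analysis directly: on the triple affected by a single braid move, exactly two of the eight zero-linking signatures give $\mu_3=1$ (the type I signatures), and the rest give the trivial, planar, totally-ordered configuration. This gives a binary choice per braid move: make that triple of type I (nontrivial $\mu_3$) or type II / ordered (trivial). With $\langle S,T\rangle$ independent binary choices, one obtains $2^{\langle S,T\rangle}$ signatures; I would argue all of these keep $\vec\lk=\vec 0$ since each local choice is drawn from the zero-linking set, and that at least one triple carries nonzero $\mu_3$, so $|\vec\mu_3|\neq 0$ for each.

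The main obstacle I anticipate is \emph{independence and nondegeneracy of the braid moves}: I must verify that the triples associated to distinct braid moves in the minimal sequence do not interfere, i.e. that assigning a type I signature at one braid move does not inadvertently cancel or create a $\mu_3$ contribution at another triple, and that the overall word still closes to a valid pure braid with $\vec\lk=\vec 0$. This requires controlling how the free-group word $S_{p_m}p_mS_{p_m}^{-1}$ in the Milnor definition decomposes across the $2N$ crossings when several braid moves are present, and confirming that the sign $\mathsf{sgn}(d)$ summed over subsequences picks up exactly the local contributions. I would handle this by tracking, for each triple, the positions $f(i,j)$ and $\ell(i,j)$ of its first and last crossings (as in the $n=3$ computation, where $|\lk|=\tfrac12|\sigma_f+\sigma_\ell|$) and showing the $\mu_3$ sum telescopes into a product of per-braid-move factors. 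The remaining subtlety is ensuring that the $2^{\langle S,T\rangle}$ count is exact rather than a lower bound, which follows once I show the braid-move choices are in bijection with the independent type I assignments and that no other zero-linking signature produces nonzero $\mu_3$ beyond those arising from braid moves.
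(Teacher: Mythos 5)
Your proposal follows essentially the same route as the paper's own (very terse) proof: by Tits' theorem the loop $ST$ bounds $\langle S,T\rangle$ hexagonal faces, each hexagon is a sorting $3$-subbraid of type I, and exactly two signatures per hexagon produce a nonzero $\mu_3$, which yields the count $2^{\langle S,T\rangle}$. If anything you are more careful than the paper, which never addresses the independence of the hexagons nor the exactness of the count that you correctly single out as the main obstacles.

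One concrete slip should be fixed, because it changes the count. In your second paragraph you describe the binary choice at each braid move as ``type I versus ordered/trivial.'' Under that reading, the assignment in which \emph{every} hexagon receives an ordered (inversion-set) signature is among your $2^{\langle S,T\rangle}$ signatures, and that assignment gives a braid with $\vec\mu_3=\vec 0$ (indeed the trivial braid, by the parallel-planes argument of Section \ref{sec:3}, or by Lemmas \ref{lm:path} and \ref{lm:conjugate}); so your assertion that ``at least one triple carries nonzero $\mu_3$'' for each of the $2^{\langle S,T\rangle}$ choices fails for exactly one of them, and your scheme would count $2^{\langle S,T\rangle}-1$ braids, not $2^{\langle S,T\rangle}$. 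The correct binary choice --- the one you in fact state in your first paragraph, and the one the paper intends --- is between the two type I signatures $(+-+)$ and $(-+-)$ on each hexagon, so that every hexagon carries a nonzero $\mu_3$ and all $2^{\langle S,T\rangle}$ resulting unlinked braids have $|\vec\mu_3|\neq 0$. With that reading fixed, your plan coincides with the paper's argument, and the independence/exactness issues you raise remain open in the paper's proof as well.
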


\begin{proof}
    The value $\langle S,T \rangle$ is by definition the minimal number of hexagonal faces bounded by the loop $ST$ on the $n$-permutahedron. Each hexagon represents a sorting 3-subbraid of type \textit{I}. As discussed in section \ref{sec:3}, exactly two signatures for every hexagon of type \textit{I} provide a non-zero $mu_3$. The theorem follows directly.
\end{proof}

Theorem \ref{thm:Bruhat2Mu} gives us a rough picture of the set of signatures providing non-trivial $\vec\mu_3$; however, it does not detect the finer details of the distribution of the magnitudes of $\vec\mu_3$ vectors. This is a consequence of $\mu_3(p,r,s)$ and $\mu_3(p,q,r)$ not being independent.

\subsection{Higher Milnor invariants.} As defined before, Milnor invariants can be generalized to an arbitrary number of strands to detect entanglement (or more precisely, Brunnian arrangements in the closure of a braid). Because of this, it is natural to ask what can be said about higher linking invariants for our sorting braids. To this end, we will prove the following theorem:

\begin{thm}\label{thm:main}
	If a sorting braid ST has all $\mu_3$ invariants equal to zero, the braid is trivial.
\end{thm}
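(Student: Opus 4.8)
The plan is to prove triviality by peeling off one strand at a time: I would use the vanishing of the length-three invariants to trivialize a single strand's longitude, delete that strand, and induct on $n$. First I would reduce to the regime $\vec\lk=\vec 0$. The number $\mu_3$ is only a well-defined invariant once all pairwise linking numbers vanish, and $\vec\lk=\vec 0$ is in any case necessary for triviality, so this reduction costs nothing. In this regime the computation $\lk(i,j)=\frac{1}{2}(\sigma_{f(i,j)}+\sigma_{\ell(i,j)})$ forces, for every pair of strands, that their two crossings (one in $S$ at time $f$, one in $T$ at time $\ell$) carry \emph{opposite} signs.

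The heart of the argument is a structural description of the longitude of a single strand. Fix the strand labelled $n$ and let $w=S_n\, n\, S_n^{-1}$ be its free-group word as in the definition of $\mu_m$, so that $S_n$ is the longitude recording, in order, each crossing of strand $n$ with the others. Because in a sorting loop strand $n$ meets each other strand exactly twice, $S_n$ is a word in which every generator $i\neq n$ occurs exactly twice; and because $\lk(i,n)=0$, those two occurrences carry opposite exponents. I would encode $S_n$ as a chord diagram on $2(n-1)$ points, joining the two occurrences of each generator by a chord. By the definition of $\mu_3$ as a signed count of subsequences $ij$ inside $S_n$, the number $\mu_3(i,j,n)$ is exactly the signed crossing number of the chords $i$ and $j$: a short computation (using that the two endpoints of a chord have opposite signs) shows this equals $0$ when the chords are nested or disjoint and $\pm 1$ when they interleave. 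The decisive point, and the place where the special combinatorics of sorting braids enters, is that there is precisely \emph{one} $i$-chord and one $j$-chord, so no cancellation can occur in this count.

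It follows that $\mu_3(i,j,n)=0$ for all $i,j$ if and only if the chord diagram of $S_n$ is non-crossing. A non-crossing diagram always has an innermost chord whose two endpoints are adjacent; since these are the same generator with opposite exponents, they cancel, and iterating shows $S_n$ reduces to the identity, i.e. the longitude $L_n$ is trivial. Thus strand $n$ is unlinked from the rest and may be deleted: the vanishing of $L_n$ means our braid lies in a complement to the kernel $F_{n-1}$ of the strand-deletion map $P_n\to P_{n-1}$, so $ST$ is trivial if and only if the braid $\beta'$ obtained by erasing strand $n$ is trivial. Deleting a strand from each half of a reduced word for the long element again yields reduced words for the long element on $n-1$ letters, so $\beta'$ is once more a sorting braid; moreover $\mu_3$ of any triple avoiding $n$ is unchanged, so all of $\beta'$'s third Milnor numbers still vanish. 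Induction on $n$, with base case $n\le 2$ where the two opposite crossings simply cancel, then finishes the proof.

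The main obstacle I anticipate is making the longitude bookkeeping rigorous: one must verify that each crossing of strand $n$ contributes exactly one letter to $S_n$ with the correct exponent, which requires tracking over/under information (equivalently, the Artin action) rather than merely the braid-generator signs, since a naive identification of the longitude exponent with the crossing sign fails even in the case $n=3$. Once the chord-diagram description of $S_n$ is in place, the combinatorial lemma (no interleaving $\Leftrightarrow$ $S_n$ reduces to the identity) and the inductive descent are routine; the one-chord-per-generator feature is precisely what excludes the cancellations that would otherwise permit a nontrivial longitude with vanishing $\mu_3$.
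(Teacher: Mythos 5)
Your plan hinges on the structural claim that the longitude $S_n$ of strand $n$ is, read in chronological order, a word containing exactly one letter $x_i^{\pm 1}$ for each crossing of strands $i$ and $n$, so that $\mu_3(i,j,n)$ becomes the signed interleaving number of the $i$-chord and the $j$-chord. That claim is false, and the paper's own $n=3$ examples refute it. Take the loop $L_1=\mathbf{121}\,\mathbf{212}$ with the unlinked signature $(+,-,-)(-,+,+)$, i.e.\ the braid $s_1 s_2^{-1} s_1^{-1} s_2^{-1} s_1 s_2$. Since $s_2^{-1}s_1^{-1}s_2^{-1}=s_1^{-1}s_2^{-1}s_1^{-1}$, this word collapses to the identity, so the braid is trivial and every $\mu_3$ vanishes (it is one of the six ``inversion set'' signatures of Section \ref{sec:3}). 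But strand $3$ meets strand $1$ at times $2,5$ and strand $2$ at times $3,6$, so the two chords interleave, and your formula returns $|\mu_3(1,2,3)|=|\epsilon_2\epsilon_3|=1$. Worse, the chord pattern depends only on the underlying loop, not on the signature, and for $L_1$ it always interleaves; so your lemma would declare all eight unlinked signatures of $L_1$ Borromean, whereas six of them give the trivial braid. No choice of ``corrected exponents'' can repair this: any chronological one-letter-per-crossing word that agrees with the true longitude through degree $2$ of the Magnus expansion must have exponent sums equal to the linking numbers, hence (discarding irrelevant letters) the form $x_i^{a}x_j^{b}x_i^{-a}x_j^{-b}$, whose $X_iX_j$-coefficient is $ab\neq 0$ whenever the chords interleave. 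Since trivial sorting braids with interleaved chords exist, the bookkeeping you defer to the end cannot be carried out at all.

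The underlying reason is that the genuine free-group longitude is not a chronological list of crossings: it is a product of conjugated meridians, with one factor $w\,x_i^{\pm 1}w^{-1}$ for each crossing where strand $n$ passes \emph{under} strand $i$, no letter at all for over-crossings, and the conjugators $w$ carry precisely the degree-two terms you are trying to count. For the Borromean signature $(+,-,+)(-,+,-)$ of $L_1$, for instance, the longitude of strand $3$ works out (up to conventions) to $(x_3^{-1}x_2^{-1}x_3)(x_1^{-1}x_3^{-1}x_2x_3x_1)$: both crossing letters are meridians of the same strand, the crossings with the other strand contribute no letters, and $\mu_3(1,2,3)=\pm 1$ lives entirely in the conjugating words. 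A correct crossing-count formula for $\mu_3$ (Polyak, Mellor--Melvin) necessarily also involves the crossings between strands $i$ and $j$ themselves, which destroys the one-chord-per-generator combinatorics your induction rests on; and even with such a formula, vanishing of all $\mu_3(\cdot,\cdot,n)$ only kills the degree-$\le 2$ part of the longitude $L_n$, not $L_n$ itself, so the strand-deletion step would still need a sorting-specific argument. The paper avoids longitudes altogether: by Tits' theorem $S$ is joined to $T^*$ by commutations and braid moves, the vanishing of $\mu_3$ forces each hexagon's signature into the inversion set so that each braid move lifts to an isotopy of signed braids (Lemma \ref{lm:path}), and the resulting braid $T^*T$ cancels letter by letter (Lemma \ref{lm:conjugate}).
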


The proof relies on the following lemma:

\begin{lm} \label{lm:path}
	If a sorting braid $ST$ has all $\mu_3$ invariants equal to zero, then it is isotopic to the braid $T^*T$. 
\end{lm}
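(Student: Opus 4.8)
The plan is to deform the first half of the loop from $S$ into $T^*$ one elementary move at a time while keeping the second half $T$ untouched, and to show that every such move is a genuine braid isotopy. Since $S$ and $T^*$ are both reduced words for the long element, Tits' theorem (as recalled above) supplies a finite sequence of commutations ${\bf ij}\sim{\bf ji}$ with $|i-j|>1$ and braid moves ${\bf jij}\mapsto{\bf iji}$ with $|i-j|=1$ carrying $S$ to $T^*$ entirely within the first-half word. Applying these moves successively produces a chain of signed sorting loops $ST=W_0T,\,W_1T,\,\dots,\,W_mT=T^*T$, and it suffices to prove that passing from $W_iT$ to $W_{i+1}T$ never changes the isotopy class of the associated braid.

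First I would dispose of the commutations: a move ${\bf ij}\sim{\bf ji}$ with $|i-j|>1$ interchanges two crossings supported on four distinct strands, so it is the far-commutativity relation already present in the braid group and is a braid isotopy on the nose. The content is therefore concentrated in the braid moves. Each braid move acts on three consecutive crossings carried by a single triple of strands $(p,q,r)$, and the local sub-loop it spans is precisely a type \emph{I} hexagon in the sense of Section \ref{sec:3}; since the loops under consideration are unlinked, each such hexagon is automatically zero-linking and the analysis there applies verbatim. By that analysis, replacing ${\bf jij}$ by ${\bf iji}$ is a braid isotopy exactly when the three signs involved form one of the six \emph{good} signatures (those respecting transitivity, i.e.\ coming from an inversion set), and it fails to be one exactly for the two \emph{bad} signatures $(+-+)$ and $(-+-)$, which produce Borromean linking and hence $\mu_3(p,q,r)=\pm1$ on the three-strand restriction.

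The induction then runs as follows. At the start every $\mu_3$ vanishes by hypothesis. Assume the first $i$ moves have all been braid isotopies, so that $W_iT$ is isotopic to $ST$ and, because the Milnor numbers are braid-isotopy invariants, still has all $\mu_3$ equal to zero. If the $(i+1)$-th move is a commutation we are done by the previous paragraph; if it is a braid move on a triple $(p,q,r)$, its local signature is good if and only if the restriction $\mu_3(p,q,r)$ of the current braid vanishes, and it does, by the induction hypothesis. Hence the move is again a braid isotopy, completing the induction. At the terminal stage the first half has become $T^*$, so $ST$ is isotopic to $T^*T$, as claimed.

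The main obstacle is the local--global bridge used in the inductive step: one must justify that the Milnor number $\mu_3(p,q,r)$ of the full sorting braid is computed by the three-strand sublink obtained by deleting the remaining strands, so that Section \ref{sec:3}'s hexagon classification applies to each individual braid move, and one must track the signs assigned to the three new crossings through each braid move so that the replacement is the correct signed braid relation. Granting that $\mu_3$ with distinct indices is an invariant of the corresponding sublink, which identifies local goodness with the vanishing of $\mu_3(p,q,r)$, the argument closes; the triviality of the resulting loop $T^*T$ is then immediate from Lemma \ref{lm:conjugate} and is exactly what powers Theorem \ref{thm:main}.
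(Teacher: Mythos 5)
Your proposal follows essentially the same route as the paper's own proof: Tits' theorem supplies a chain of commutations and braid moves carrying $S$ to $T^*$ within the first half of the loop, commutations are isotopies for free, and the vanishing of all $\mu_3$ forces each braid-move triple to carry an inversion-set (transitivity-respecting) signature, for which the signed braid relation of Figure \ref{fig:isotopy} is a genuine isotopy. Your explicit induction, propagating the hypothesis along the chain via isotopy-invariance of $\mu_3$, merely makes precise what the paper leaves implicit, so the two arguments coincide in substance.
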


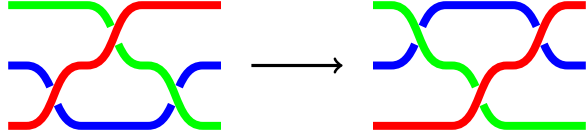
\begin{figure}[!hbt]\centering
	\begin{tikzpicture}[scale=0.8]
	\braid[
	rotate=90,
	line width=3pt,
	floor command={%
		% rectangle (\floorex,\floorey);
		\draw (\floorsx,\floorsy) -- (\floorex,\floorsy);
	},
	style strands={1}{red},
	style strands={2}{blue},
	style strands={3}{green}
	] (braid) at (0,0) s_1 s_2 s_1^{-1}; 
	\draw [->,very thick](4,1) -- (5.5,1);
\braid[
rotate=90,
line width=3pt,
floor command={%
	% rectangle (\floorex,\floorey);
	\draw (\floorsx,\floorsy) -- (\floorex,\floorsy);
},
style strands={1}{red},
style strands={2}{blue},
style strands={3}{green}
] (braid) at (0,-6) s_2^{-1} s_1 s_2; 
			\end{tikzpicture}
			\caption{Braid move for the signed braid}\label{fig:isotopy}
\end{figure}

\begin{proof}
    For our signed sorting network, define a commutation as ${\bf j}_1^{\sigma_1} {\bf j}_2^{\sigma_2}\to {\bf j}_2^{\sigma_2} {\bf j}_1^{\sigma_1}$ and a braid move as ${\bf j}_1^{\sigma_1} {\bf j}_2^{\sigma_2}{\bf j}_1^{\sigma_3}\to {\bf j}_2^{\sigma_3} {\bf j}_1^{\sigma_2}{\bf j}_2^{\sigma_1}$.
    
    Let $ST$ represent a sorting loop. By definition, there exists a path $S=S_0\to S_1\to\cdots\to S_m=T^*$ in $\sort(n)$ such that each step is either a braid move or a commutation. Since all $\mu_3$ are zero by assumption, the signature for every triple involved in the braid move must belong to the inversion set. Each of our braid moves, then, can be extended to the signed sorting networks $S(\sigma)$ and $T^*(\sigma')$ as well. This directly implies that the same minimal path leads from $S(\sigma)$ to $T^*(\sigma')$. As shown in figure \ref{fig:isotopy}, every step in the sequence is a braid isotopy.
\end{proof}

The claim of Theorem \ref{thm:main} now follows from lemmas \ref{lm:path} and \ref{lm:conjugate}.

\iffalse\begin{figure}
	\centering
	\begin{tikzpicture}
	
	\tikzmath{\x1 = 6; \x2=0; 
					 \y1 =0;\y2=6;
					 \z1=-4;\z2=-2;
				 \x=\x1+\z1;
				 \y13=\y1+\z1;
				 \x23=\x2+\z2;
				 \y23=\y2+\z2;
				 \x12=\x1+\y1;
				 \x22=\x2+\y2;
				 \d1=\x1+\y1+\z1;
				 \d2=\x2+\y2+\z2;
			 } 
\coordinate (O) (0,0);
\coordinate (X) (\x1,\x2);
\coordinate (Y) (\y1,\y2);
\coordinate (Z) (\z1,\z2);
\coordinate (XZ) (\x13,\x23);
\coordinate (XY) (\x12,\x22);
\coordinate (YZ) (\y13,\y23);
\coordinate (D) (\d1,\d2);
	\draw[->] (O)--(\z1,\z2);
	\draw[->] (0,0)--(\x1,\x2);
	\draw[->] (0,0)--(\y1,\y2);
	\draw (X)--(XY)--(Y)--(D)--(Z)--(XZ)--(X);
	%\filldraw[fill opacity=0.2, fill=red] (O)--(6,6,0)--(4,6,4)--(Z)--(O);
	%\filldraw[fill opacity=0.2, fill=red] (O)--(-2,6,4)--(4,6,4)--(X)--(O);
	%\filldraw[fill opacity=0.2, fill=red] (O)--(4,0,4)--(4,6,4)--(Y)--(O);
	%\filldraw[fill opacity=0.2, fill=blue] (6,0,0)--(0,6,0)--(-2,0,4);
	%\draw[dashed] (0,0,0) -- (6,0,0);
	%\draw (6,0,0)--(6,6,0)--(6,6,4)--(0,6,4)--(0,0,4);
	%\draw[dashed] (0,0,4)--(0,0,0);
	%\draw[dashed] (0,0,0) -- (0,6,0);
	%\draw(0,6,0)--(6,6,0);
	%\draw (0,6,4) -- (0,6,0);
	%\draw (0,0,4) -- (6,0,4)--(6,6,4);
	%\draw(6,0,4) -- (6,0,0);
	%\draw (1,2,3) -- (2,1,3) -- (3,1,2) -- (3,2,1) -- (2,3,1) -- (1,3,2) -- (1,2,3);
	%\filldraw[fill opacity=0.2, fill=red] (0,0,0)--(0,6,4)--(6,6,4)--(6,0,0)--(0,0,0);
	%\filldraw[fill opacity=0.2, fill=red] (0,0,0)--(6,0,4)--(6,6,4)--(0,6,0)--(0,0,0);
	%\filldraw[fill opacity=0.2, fill=red] (0,0,0)--(6,6,0)--(6,6,4)--(0,0,4)--(0,0,0);
	\end{tikzpicture}
\end{figure}

\fi
\iffalse
 In the next chapter we provide some analysis on the asymptotics of these invariants for the dynamic sorting braids.

\begin{exm}
	Consider signed sorting network from $\sort_{\pm}(3)$: $S=121$, and
	$S=\bar{121}$. The corresponding 
	sorting braid is \[\dsb(S,-S)=121\bar{1}\bar{2}\bar{1}\]
\end{exm}

\fi

\section{Two Particular Constructions.}
For a signed sorting network, we are free to choose several rules for defining the second half of a sorting loop. In this section, we will describe two particular constructions that come from seemingly natural choices of the second half.

\subsection{Algebraic sorting braids.} When considering sorting networks in terms of the $n$-permutahedron, a natural geometric approach to constructing a sorting loop is to approximate an equator. We will call the pure braid arising from this construction an \textit{algebraic sorting braid}, which we will define as follows:

\begin{dfn}
	An \emph{Algebraic sorting braid} (ASB) corresponding to the sorting 
	network 
	$S={\bf j}_1^{\sigma_1}\cdots {\bf j}_N^{\sigma_N}\in \sort(n)$ is 
	constructed as 
	\[\asb(S,\sigma)={\bf j}_1^{\sigma_1}\cdots {\bf j}_N^{\sigma_N} 
	({\bf n-j}_1)^{-\sigma_1}\cdots ({\bf n-j}_N)^{-\sigma_N}. \]
\end{dfn}

We can immediately prove that

\begin{lm}
	Every algebraic sorting braid is unlinked.
\end{lm}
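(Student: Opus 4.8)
The plan is to prove that every pairwise linking number $\lk(a,b)$ vanishes, which is exactly the statement $\vec\lk=\vec 0$ recorded as ``unlinked'' in the discussion preceding Theorem 1. I would use the formula $\lk(a,b)=\frac12(\sigma_{f(a,b)}+\sigma_{\ell(a,b)})$ established in the proof of Theorem 1, together with the fact proved there that each pair of strands crosses exactly once in the first half (at index $f\in[1,N]$) and once in the second half (at index $\ell\in[N,2N]$). Thus it suffices to show that for every pair of strands the first and last crossings carry opposite signs.

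First I would set up the mirror symmetry of the construction algebraically. Writing $w_0$ for the longest element, so that $w_0(i)=n+1-i$, the conjugation identity $w_0\,{\bf j}\,w_0={\bf n-j}$ shows that the second half of the ASB, read as an unsigned product $({\bf n-j}_1)\cdots({\bf n-j}_N)$, equals $w_0\,({\bf j}_1\cdots{\bf j}_N)\,w_0=w_0$, confirming that the loop genuinely returns to the identity. More usefully, if $s_k={\bf j}_1\cdots{\bf j}_k$ denotes the configuration during the first half and $t_k$ denotes the configuration $k$ steps into the second half (with $t_0=w_0$), the same identity yields the clean relation $t_k=s_k\,w_0$.

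The central step, which I expect to be the crux, is the claim that the pair of strands crossing at step $k$ of the first half is identical to the pair crossing at step $k$ of the second half. In the first half, step $k$ swaps positions $j_k$ and $j_k+1$, so the crossing pair is $\{s_{k-1}(j_k),\,s_{k-1}(j_k+1)\}$. In the second half, step $k$ uses ${\bf n-j}_k$ and so swaps positions $n-j_k$ and $n-j_k+1$, giving the crossing pair $\{t_{k-1}(n-j_k),\,t_{k-1}(n-j_k+1)\}$; substituting $t_{k-1}=s_{k-1}w_0$ and $w_0(p)=n+1-p$ collapses this to $\{s_{k-1}(j_k+1),\,s_{k-1}(j_k)\}$, the very same pair. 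This is precisely where the reflected indexing $j\mapsto n-j$ built into the ASB does the work, and verifying this position bookkeeping is the only delicate point of the argument.

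Finally I would read off the signs. By construction, step $k$ of the first half contributes sign $\sigma_k$ while step $k$ of the second half contributes $-\sigma_k$. Since these two crossings are the unique first and last crossings $f(a,b),\ell(a,b)$ of one and the same pair $(a,b)$, we obtain $\lk(a,b)=\frac12(\sigma_k+(-\sigma_k))=0$. As $k$ ranges over the $N$ steps this exhausts all $\binom{n}{2}$ pairs of strands, so $\vec\lk=\vec 0$ and the algebraic sorting braid is unlinked.
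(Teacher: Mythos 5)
Your proof is correct and follows essentially the same route as the paper's: both arguments show that the $k$-th crossing of the second half involves exactly the same pair of particles as the $k$-th crossing of the first half (via the mirror symmetry $p\mapsto n+1-p$ induced by the sorting network), so that $f(a,b)=k$, $\ell(a,b)=N+k$ carry signs $\sigma_k$ and $-\sigma_k$ and all linking numbers cancel. Your explicit conjugation bookkeeping ($w_0\,{\bf j}\,w_0={\bf n-j}$, $t_k=s_k w_0$) is simply a more formal rendering of the trajectory-tracking the paper does verbally.
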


\begin{proof}
	For any pair of particles $(p_1,p_2)$ such that $j_k=f(p_1,p_2)$ it follows that ${\bf j}_k$ corresponds to the crossing of the $p_1$-st and $p_2$-nd strands in the wiring diagram. It follows that $({\bf n-j}_k)$ in the product $({\bf n-j}_1)\cdots ({\bf n-j}_N)$ corresponds to the crossing of $(n+1-p_1)$-st and $(n+1-p_2)$-nd strands. These strands, though, are exactly those corresponding to the trajectories of the particles at the positions $(n+1-p_1)$ and $(n+1-p_2)$ after applying ${\bf j}_1\cdots {\bf j}_N$. Since the latter product corresponds to the sorting network, the particles are $p_1$ and $p_2$; therefore $\ell(p_1,p_2)=N+k$. Since $\sigma_{N-k}=-\sigma_k$, we have that $\lk(p_1,p_2)=\frac 12 (\sigma_{k}-\sigma_{k})=0$.
\end{proof}

We can also easily demonstrate the following:

\begin{thm}
	Out of all $2^{N}$ signatures of the sorting network $S$ there are exactly $n!$ which provide a trivial ASB.
\end{thm}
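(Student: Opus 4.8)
The plan is to combine the triviality criterion of Theorem~\ref{thm:main} with the observation (established for $n=3$ in Section~\ref{sec:3}) that a single triple of strands contributes a non-trivial third Milnor invariant exactly when its three crossing signs are ``cyclic''. By construction $\asb(S,\sigma)$ is a sorting loop $ST$, with second half the reflected network $({\bf n-j}_1)\cdots({\bf n-j}_N)$, and the unlinkedness lemma proved just above guarantees $\vec\lk=\vec0$ for every ASB. Hence Theorem~\ref{thm:main} applies: $\asb(S,\sigma)$ is trivial whenever all its $\mu_3$ invariants vanish. The reverse implication is free, since $\mu_3$ is a braid invariant and a trivial braid has all $\mu_3=0$. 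Thus the quantity to be counted is precisely the number of signatures $\sigma\in\{\pm1\}^N$ for which $\mu_3(a,b,c)=0$ for every triple.

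Next I would set up the combinatorial dictionary. In a sorting network each of the $N=\binom{n}{2}$ pairs of particles crosses exactly once in the first half, so a signature $\sigma\in\{\pm1\}^N$ is the same data as an orientation of every edge of the complete graph on the $n$ particles, i.e.\ a tournament $\tau(\sigma)$ on $\{1,\dots,n\}$; this is a bijection because both sides have cardinality $2^{\binom{n}{2}}$. Under this dictionary the inversion sets are exactly the transitive tournaments, and there are $n!$ of them, one for each total order. So the theorem reduces to the claim that all $\mu_3$ vanish if and only if $\tau(\sigma)$ is transitive, that is, contains no directed $3$-cycle.

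The heart of the argument is a strand-forgetting step. Fixing a triple $\{a,b,c\}$, deleting the other $n-3$ strands leaves $\mu_3(a,b,c)$ unchanged, and the remaining $3$-strand braid is itself an algebraic sorting braid: restricting $S$ to the triple gives a reduced word for the reversal on three elements (each pair crosses once), while the reflected, sign-negated second half restricts—exactly as in the unlinkedness lemma, where $({\bf n-j}_k)$ re-crosses the same pair $(p_1,p_2)$ at the mirrored time—to the reflected second half of a $3$-strand ASB. Every $3$-strand ASB is the loop $L_1$ of Section~\ref{sec:3}, for which the computation there shows $\mu_3\neq0$ precisely for the two cyclic signatures $(+-+)$ and $(-+-)$, i.e.\ precisely when the three edges of $\tau(\sigma)$ on $\{a,b,c\}$ form a directed $3$-cycle. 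Consequently all $\mu_3$ vanish iff $\tau(\sigma)$ has no $3$-cycle iff $\tau(\sigma)$ is transitive, and combined with the triviality equivalence of the first paragraph this yields exactly $n!$ trivial ASBs.

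I expect the main obstacle to be this strand-forgetting step, which has two parts. First one must verify that restricting an ASB to a triple genuinely reproduces a $3$-strand ASB, so that the Section~\ref{sec:3} computation applies verbatim; this rests on the geometric fact already used to prove unlinkedness—that the reflection ${\bf j}\mapsto{\bf n-j}$ forces each pair to re-cross at the mirrored time, so the induced order of the three second-half crossings is the reverse of that in the first half, giving the $L_1$ pattern rather than the degenerate $L_2$ one. Second one must invoke that $\mu_3$ of a triple is unaffected by deleting the other strands, which is the standard invariance of Milnor numbers under the strand-forgetting homomorphism of the pure braid group.
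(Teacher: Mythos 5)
Your proposal follows essentially the same route as the paper's proof: reduce triviality of the ASB to the vanishing of all $\mu_3$ invariants, observe that every $3$-subbraid of an ASB is a sorting $3$-braid of type I so that the computation of Section~\ref{sec:3} identifies the bad signatures on each triple as the two cyclic ones, and conclude that the admissible signatures are exactly those encoding a total order, of which there are $n!$. Your tournament dictionary is a clean repackaging of the paper's inversion-set argument --- the paper's step ``orderings on overlapping triples match at the common pair, hence extend to a total order'' is precisely the statement that a tournament with no directed $3$-cycle is transitive --- and your explicit appeal to Theorem~\ref{thm:main} for the sufficiency direction is slightly tidier bookkeeping than the paper's implicit reliance on the parallel-planes construction of Section~\ref{sec:3}.

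There is, however, one slip in your justification of the key strand-forgetting step, and as literally stated it contradicts the conclusion you draw from it. You claim the reflection ${\bf j}\mapsto{\bf n-j}$ makes each triple's second-half crossings occur in the \emph{reverse} of their first-half order, ``giving the $L_1$ pattern.'' It is the other way around: by the unlinkedness lemma, $\ell(p_1,p_2)=N+f(p_1,p_2)$, so the second-half crossings occur in the \emph{same} order as the first-half ones, and it is this same-order pattern ($\ell-f=N$, matching $\ell-f=3$ for $L_1$) that is type I; the reverse-order pattern ($\ell+f=2N+1$, as for $L_2$) is type II. Had the order genuinely been reversed, every triple would restrict to a type II loop, for which \emph{all} signatures with zero linking give trivial $3$-braids, and your count of two forbidden signatures per triple --- hence the whole enumeration --- would collapse. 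Since the lemma you cite actually yields the same-order pattern, your conclusion (type I, hence exactly the two cyclic signatures per triple are excluded) is correct once this sentence is fixed; the rest of the argument then goes through as written.
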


\begin{proof}
Any $m$-subbraid of an algebraic sorting braid is algebraic sorting braid on $m$ particles. Therefore, any $3$-subbraid is a sorting $3$-braid of type $I$ (see Fig. \ref{Fig:2B}). In order for the whole braid to be trivial, all $\mu_3$ invariants must to be zero. Therefore all the signatures corresponding to the $3$-braids must to belong to the inversion set and therefore provide an ordering on respective trajectories. Since any two trajectories intersect exactly once in the first half of a sorting braid, the orderings on two different triples $\{i,j,k\}$ and $\{i,j,k'\}$ must match at the intersection $\{i,j\}$. Hence, the order relation extends to a total order on the whole set of trajectories. Since there are exactly $n!$ different total orderings on the set of $n$ elements, the theorem follows.
\end{proof}

Since any triplet of strands in ASB form a sorting loop of type $I$ and signatures providing inversion sets for the triplets $\{i,j,k\}$ and $\{i,j,k'\}$ conditioned to the relative order of $i$ and $j$ are independent, the averaged magnitude of the third Milnor invariant of any ASB equals to $$\mathbb{E}|\mu_3(\asb(S))|=\frac 14 \binom{n}{3}.$$

\iffalse
{\bf MA: I remember last summer we had some thoughts on the averaged $\mu_3$. Do you remember how it worked? It worth adding it here.}
\fi

\subsection{Dynamic sorting braid}
Another natural choice of the second half of a sorting loop is motivated by a dynamical interpretation of the word representation of the sorting network. We can construct a loop from a word on elementary transpositions simply by iterating the word a second time. Since this second part of the loop will act on the reversed interval, is seems natural to reverse the signatures of each transposition in the second part of the loop. We will define the resulting pure braid as follows:

\begin{dfn}
	A \emph{Dynamic sorting braid} (DSB) is obtained from the signed sorting 
	network $S(\sigma)={\bf j}_1^{\sigma_1}\cdots {\bf j}_N^{\sigma_N}$ by the 
	concatenation
	$$\dsb(S,\sigma)=S(\sigma) S({-\sigma})={\bf j}_1^{\sigma_1}\cdots {\bf j}_N^{\sigma_N}{\bf j}_1^{-\sigma_1}\cdots {\bf j}_N^{-\sigma_N}$$ (see 
	Fig. \ref{Fig:2A})).
\end{dfn}

Our definition of the DSB leads us to the following lemma:
\begin{lm}
	Let $f(p_1,p_2)$ be the index of the transposition corresponding to 
	the first crossing of the trajectories of the particles $p_1$ and $p_2$. If the signatures of the 
	transpositions $f(p_1,p_2)$ and $f(n+1-p_1,n+1-p_2)$ do not coincide, then $|\lk(p_1,p_2)|=1$. 
	%	Therefore if $i_1\ne n-i_1$ and $i_1\ne n-i_2$ all the corresponding 
	%	transpositions are paired.
\end{lm}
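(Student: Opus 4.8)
The plan is to locate the \emph{second} (hence last) crossing of the trajectories of $p_1$ and $p_2$ inside the second half $S(-\sigma)$ of the dynamic sorting braid, read off its sign, and then feed both crossings into the linking-number formula $\lk(p_1,p_2)=\tfrac12(\sigma_f+\sigma_\ell)$ already used in Section \ref{sec:3}. Since $p_1$ and $p_2$ cross exactly once in each half of a sorting loop, the only unknowns are the position $\ell(p_1,p_2)$ of the second crossing and its signature; once these are pinned down the conclusion is one line of arithmetic.

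The key structural observation I would exploit is a reversal symmetry between the two halves. After the first half $S$, the configuration is the long element, so particle $i$ sits at position $n+1-i$; equivalently, position $p$ is occupied by particle $n+1-p$. Relabelling every particle $q$ by $q\mapsto n+1-q$ therefore turns the starting configuration of the second half back into the identity, and since the second half repeats the very same word $S={\bf j}_1\cdots{\bf j}_N$, this relabelling identifies the second half (as a sequence of crossings between relabelled particles) with the first half verbatim. Consequently the crossing of the \emph{original} particles $p_1,p_2$ inside $S(-\sigma)$ occurs at exactly the local index at which the relabelled pair $n+1-p_1,\,n+1-p_2$ cross in the first half, giving
\[
\ell(p_1,p_2)=N+f(n+1-p_1,\,n+1-p_2).
\]

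To finish I would read off the sign. Because the second half carries the signature $-\sigma$, the crossing sitting at local index $f(n+1-p_1,n+1-p_2)$ contributes $-\sigma_{f(n+1-p_1,n+1-p_2)}$, so that
\[
\lk(p_1,p_2)=\tfrac12\bigl(\sigma_{f(p_1,p_2)}-\sigma_{f(n+1-p_1,\,n+1-p_2)}\bigr).
\]
If the two signatures do not coincide, they are $+1$ and $-1$ in some order, their difference is $\pm2$, and hence $|\lk(p_1,p_2)|=1$, as claimed.

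The hard part will be justifying the relabelling step cleanly rather than by hand: I must verify that $q\mapsto n+1-q$ is precisely the bijection conjugating the second-half wiring diagram into the first-half one, i.e.\ that applying $S$ to the reversed configuration reproduces the crossing pattern obtained by applying $S$ to the identity, up to this relabelling. This is the statement that the wiring diagram of $SS$ is symmetric under the reflection exchanging the two halves together with order-reversal of positions; I would make it precise by tracking the partial permutations $s_k$ and checking the identity $\ell(p_1,p_2)=N+f(n+1-p_1,n+1-p_2)$ directly from the definitions of $f$ and $\ell$, using the case $S={\bf 121}$ as a sanity check.
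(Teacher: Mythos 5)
Your proposal is correct and follows essentially the same argument as the paper: both rest on the observation that the position-reversal relabelling $q\mapsto n+1-q$ identifies the crossings of the second half of $SS$ with those of the first half, so that the two crossings of a pair carry signatures $\sigma_{f(p_1,p_2)}$ and $-\sigma_{f(n+1-p_1,n+1-p_2)}$, and the conclusion follows from $\lk=\tfrac12(\sigma_f+\sigma_\ell)$. The only cosmetic difference is that you compute $\lk(p_1,p_2)$ directly, whereas the paper computes $\lk(n+1-p_1,n+1-p_2)$ and appeals to the symmetry of the hypothesis under $p_i\mapsto n+1-p_i$.
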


\begin{proof}
    If $f(p_1,p_2)=k$, it follows that the transposition ${\bf j}_k$ in the word $S={\bf j}_1\cdots {\bf j}_N$ corresponds to the intersection of the trajectories of the particles $p_1$ and $p_2$. Hence, the $(N+k)$-th transposition in the sorting loop $SS$ corresponds to the  second intersection of the trajectories of the particles $(n+1-p_2)$ and $(n+1-p_1)$. By the definition of the dynamic sorting braid we have $\sigma_{N+k}=-\sigma_{k}$. Therefore, $|\lk(n+1-p_1,n+1-p_2)|=|\sigma_{f(n+1-p_1,n+1-p_2)}-\sigma_{k}|$. Since $k=f(p_1,p_2)$ the lemma follows.
\end{proof}

This lemma implies

\begin{thm}\label{thm:dynamic}
	Given $S\in\sort(n)$ there are $2^{\lfloor n/2\rfloor \lceil n/2\rceil}$ signatures $\sigma$ corresponding to the
	unlinked DSB. 
\end{thm}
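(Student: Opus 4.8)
The plan is to recast the unlinking condition as one constraint per orbit of a reflection involution on particle pairs, and then count the orbits via Burnside's lemma.

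First I would encode the signature as a function on pairs. In the first half of the loop each pair of particles $\{a,b\}$ crosses exactly once—at position $f(a,b)$—so the datum $\sigma\in\{\pm1\}^N$ is the same as a free assignment $\tau(\{a,b\}):=\sigma_{f(a,b)}$ on the $N=\binom{n}{2}$ unordered pairs. Reading off the preceding lemma (and its proof, which locates the second crossing of $\{a,b\}$ at position $N+f(n+1-a,n+1-b)$ with negated sign), one obtains $\lk(a,b)=\tfrac12\bigl(\sigma_{f(a,b)}-\sigma_{f(n+1-a,\,n+1-b)}\bigr)$. Hence $\{a,b\}$ is unlinked exactly when $\tau(\{a,b\})=\tau(\phi(\{a,b\}))$, where $\phi$ is the involution on pairs induced by the reflection $\psi\colon a\mapsto n+1-a$. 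The DSB is therefore unlinked if and only if $\tau$ is constant on every orbit of $\phi$.

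Next I would count the admissible $\tau$. The constraints $\tau(P)=\tau(\phi(P))$ organize themselves by orbit: on a fixed point the condition is vacuous, and on a two-element orbit it forces a single common value, so in either case each orbit contributes exactly one independent binary choice. Thus the number of unlinked signatures is $2^{r}$, where $r$ is the number of orbits of $\phi$. By Burnside's lemma for an order-two action, $r=\tfrac12\bigl(N+|\mathrm{Fix}(\phi)|\bigr)$, and the fixed pairs are precisely the antipodal pairs $\{a,n+1-a\}$, of which there are $\lfloor n/2\rfloor$. Substituting $N=\tfrac{n(n-1)}2$ and simplifying separately for $n$ even and odd gives $r=\tfrac12\bigl(\tfrac{n(n-1)}2+\lfloor n/2\rfloor\bigr)=\lfloor n/2\rfloor\lceil n/2\rceil$, which yields the claimed count $2^{\lfloor n/2\rfloor\lceil n/2\rceil}$.

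I expect the only delicate point to be the bookkeeping in the first step: one must verify that the pairwise unlinking conditions decouple into exactly the symmetric relations $\tau(P)=\tau(\phi(P))$, with no residual coupling between distinct orbits. Because that relation is manifestly symmetric under $P\leftrightarrow\phi(P)$ and the individual signs $\sigma_k$ are chosen freely and independently, the constraints do factor orbit by orbit. The remaining care is to confirm that when $n$ is odd the central index $a=(n+1)/2$ yields no antipodal pair (so that $|\mathrm{Fix}(\phi)|=\lfloor n/2\rfloor$ exactly), which is immediate.
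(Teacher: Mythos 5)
Your proof is correct and takes essentially the same route as the paper: the paper deduces the theorem directly from the preceding lemma, whose content is precisely your reformulation that unlinking forces $\sigma_{f(a,b)}=\sigma_{f(n+1-a,\,n+1-b)}$, i.e.\ that the signature is constant on the orbits of the antipodal involution on pairs. Your Burnside computation merely makes explicit the orbit count (with $\lfloor n/2\rfloor$ fixed antipodal pairs) that the paper leaves implicit in the phrase ``this lemma implies.''
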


\subsection{Slim networks.}
Unlike the case of the ASB, the amount of signatures providing a trivial DSB depends on the particular sorting network. It is natural to ask if there exists a network for which this amount attains its extremal values. This lead us to the following definition:

\begin{dfn}
	We will call a sorting network $S$ to be \emph{slim} if for any signature $\sigma$, the braid $\dsb(S,\sigma)$ is trivial. 
\end{dfn}

The simplest example of a slim sorting network is the construction $S_0=\bf (12\cdots (n-1))\,(1\cdot2\cdots (n-2))\,\cdots (12)\, 1 $. In fact, $S_0^2$ corresponds to the \emph{full twist} element in the braid group. We think it instructive to show for this particular example that we can achieve $S_0^*={\bf 1 (21)(321)\cdots ((n-1)\cdots 21)}$ from $S_0$ using only commutations. The process is iterative:

\begin{enumerate}
	\item Start with the second block $\bf (12\cdots (n-2))$ in $S_0$. The generator $\bf 1$ commutes with everything but the second letter in the block left to it. Therefore, we obtain
	$$\bf( 1(21) (34\cdots (n-1))(23\cdots (n-2))\cdots (12)1.$$
	Likewise, the generator $\bf 2$ in the second block commutes with everything but the $\bf 3$ on the left. We get 
	$$\bf (1(21)(32) (4\cdots (n-1)))(3\cdots (n-2))\cdots (12)1$$ etc. Finally we obtain
	 $$\bf (1(21)(32) (43)\cdots ((n-1)(n-2)))(12\cdots (n-3))\cdots (12)1.$$
	 \item Now perform the same operations with the block $\bf (12\cdots (n-3))$: Starting with the first generator, move it by commutations to the leftmost possible position, then move to the second generator and move it to the leftmost position achievable by commutations, etc. We get 
	 $$\bf(1(21)(321)(432)\cdots ((n-1)(n-2)(n-3)))(12\cdots(n-4))\cdots (12)1.$$ 
	 \item Continuing this procedure with each block we obtain $S_0^*$.
\end{enumerate}

This construction may be generalized to provide a family of slim sorting networks. However, we were unable to find a defining property of a sorting network to be slim obstructing us from the complete description of the set of slim networks. As such, we are left with two lingering questions about the nature of this set:

\begin{qst}
    How big is the set of slim sorting networks?
\end{qst}

One can also come up with the construction of somewhat opposite to the slim network, i.e. the network providing the DSB with the most magnitude of the vector of Milnor invariants (see Fig. \ref{fig:last}). Therefore we can define the \emph{fatness} of a sorting network as the maximal magnitude of $\vec\mu_3$ for $\dsb(S,\sigma)$ over all signatures $\sigma$. This measures the area on the permutahedron bounded by the loop $SS^*$, leading us to our second question:

\iffalse

\begin{prp}
	For the slim network we have $\rho(S_0,S_0)=0$.
\end{prp}
\fi
\iffalse
\begin{qst}
	Are there any other slim networks?
\end{qst}
 yes

\begin{qst}
	Can we describe all the "fat ones"? 
\end{qst}
yes

\fi

\begin{qst}
	How ``fat'' is the typical sorting network, and more generally what does the distribution of fatness look like across all sorting networks?
\end{qst}

\iffalse

\begin{thm}
	For $n=3$ any unlinked DSB is trivial.
\end{thm}

\begin{proof} 
	There are clearly two distinct sorting networks in $\sort(3)$, each with 
	three steps: $s_1s_2s_1$ and $s_2s_1s_2$. Whatever signature is 
	assigned to the given sorting braid, it is clear that since the $k$-th step 
	in a network is the same as the $(n-k)$-th step, all signed braid words 
	are identity.
\end{proof}

\begin{qst}
	Among any unlinked $\dsb(S_\sigma)$ how many have 
	non-trivial $\mu_3$?
\end{qst}
\fi
\iffalse
\begin{figure}[!h]
	\centering
	%  \vspace{ cm}
	\begin{tikzpicture}
	\braid[rotate=90,style strands={1}{black},style strands={2}{black},style 
	strands={3}{black}] s_1 s_2^{-1} s_3 s_2 s_1^{-1} s_2    s_1^{-1} s_2 
	s_3^{-1} s_2^{-1} s_1 s_2^{-1};
	\fill [black](0,4)circle (4 pt) ;
	\fill [red](0,3)circle (4 pt) ;
	\fill [green](0,2)circle (4 pt) ;
	\fill [blue](0,1)circle (4 pt) ;
	
	\fill [blue](6.25,4)circle (4 pt) ;
	\fill [green](6.25,3)circle (4 pt) ;
	\fill [red](6.25,2)circle (4 pt) ;
	\fill [black](6.25,1)circle (4 pt) ;
	
	\fill [black](12.5,4)circle (4 pt) ;
	\fill [red](12.5,3)circle (4 pt) ;
	\fill [green](12.5,2)circle (4 pt) ;
	\fill [blue](12.5,1)circle (4 pt) ;
	
	\end{tikzpicture}
	\caption{Non-trivial unlink
		in the sorting network
		$1\bar{2}32\bar{1}2$.}\label{fig:non-trivial}
\end{figure}
\fi

\subsection{Experimental results.}
According to a theorem by Stanley, the cardinality of the set $\sort(n)$ grows immensely fast.

\begin{thm}[\cite{Stanley1986}]
	Set $\sort(n)$ has exactly 
	\[\frac{\binom{n}{2}!}{1^{n-1} 3^{n-2} 5^{n-3}\cdots (2n-3)^1}\] 
	elements.
\end{thm}

Because of this, we were only able to directly analyze results for the first few values of $n$. Below we examine the set $\sort(4)$ in detail, and we provide numerical results obtained for the set $\sort(5)$.

\subsubsection{The case of $n = 4$}
The set $\sort(4)$ contains sixteen distinct sorting networks, which form four equivalence classes up to obvious symmetries:
$$\begin{aligned}
S_0=\bf 1 2 3 1 2 1,\\S_1=\bf 1 23212,\\S_2=\bf 1 3 2 3 12,\\S_3=\bf 1 3 2 1 3 2.\end{aligned}$$
From theorem \ref{thm:dynamic}, it follows that sixteen signatures for each of these sorting networks provides an unlinked DSB. Among them, $S_0$ is a slim network for which no signatures provide a non-trivial linked braid (see Fig. \ref{fig:S0}). 

\begin{figure}[hbt]
	\centering
	\begin{subfigure}[b]{0.24\textwidth}
		\includegraphics[width=\textwidth]{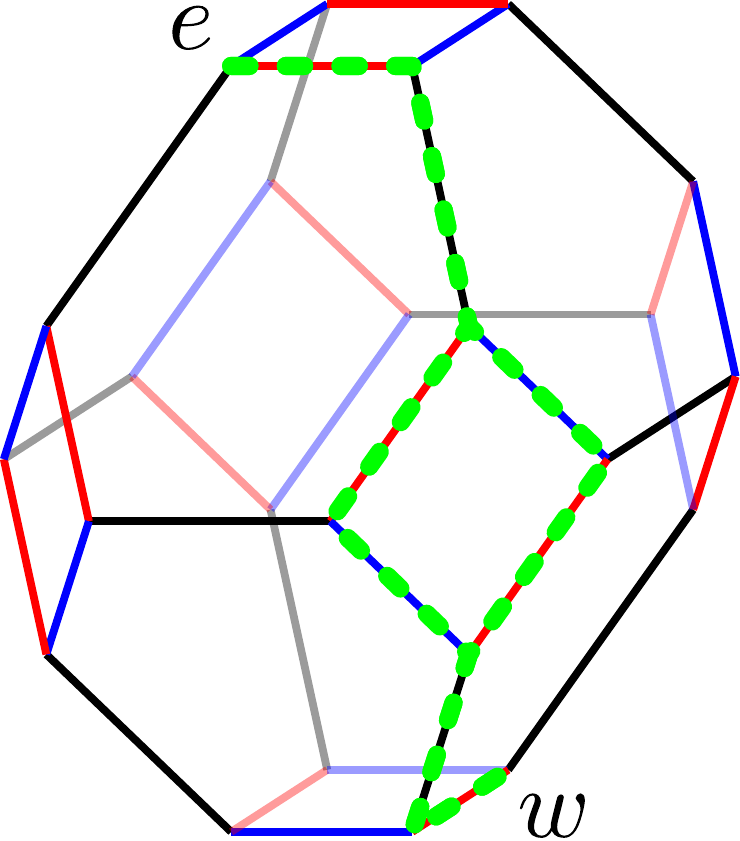}
		\caption{$S_0$}\label{fig:S0}
	\end{subfigure} \begin{subfigure}[b]{0.24\textwidth}
		\includegraphics[width=\textwidth]{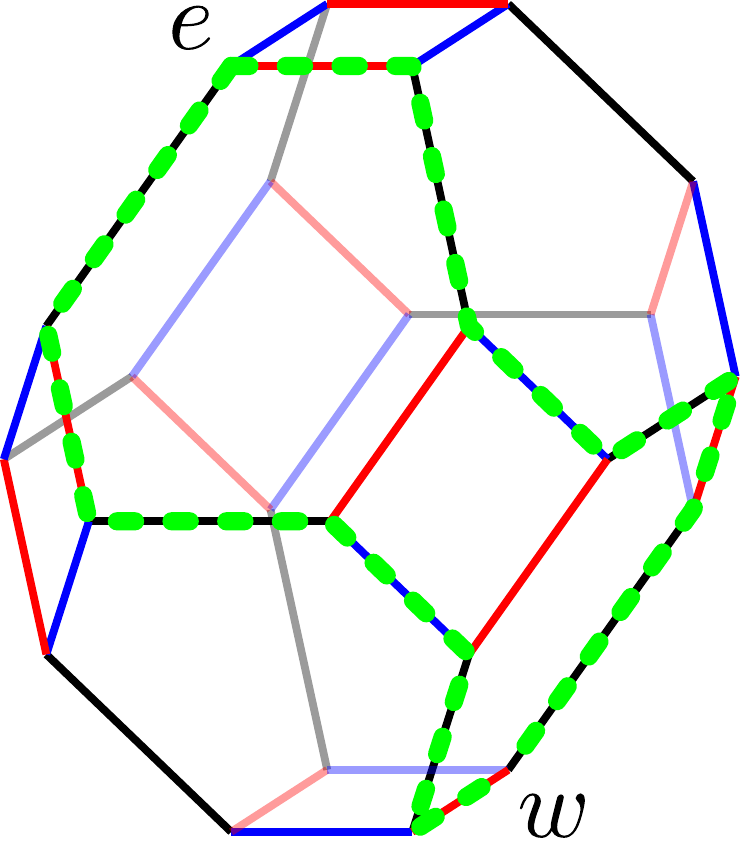}
		\caption{$S_1$} \label{fig:S1}
	\end{subfigure}
	\begin{subfigure}[b]{0.24\textwidth}
		\includegraphics[width=\textwidth]{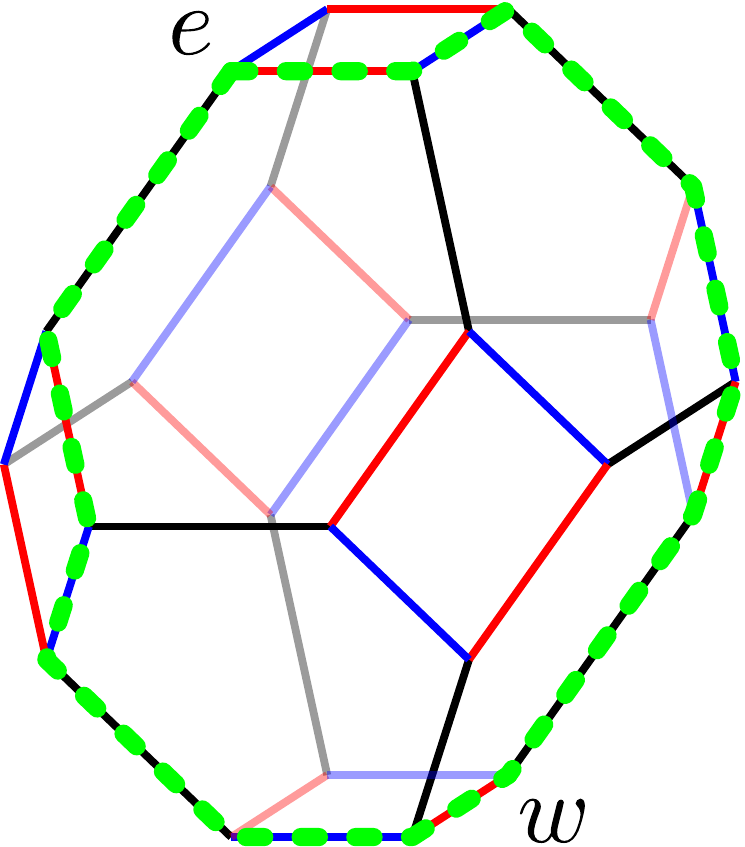}
		\caption{$S_2$}\label{fig:S2}
	\end{subfigure}\begin{subfigure}[b]{0.24\textwidth}
		\includegraphics[width=\textwidth]{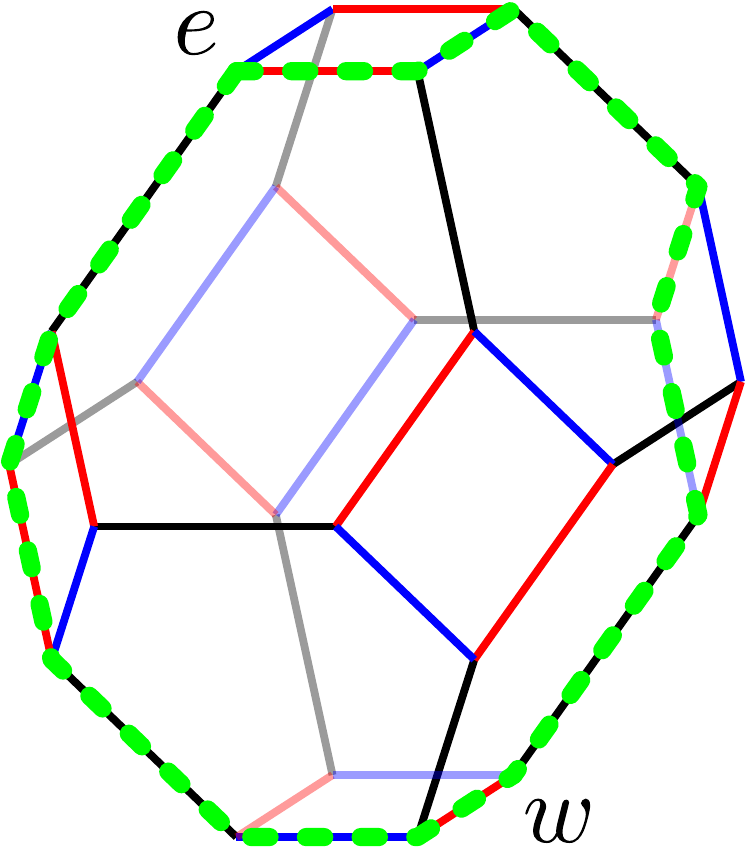}
		\caption{$S_3$}\label{fig:S3}
	\end{subfigure}
	\caption{Loops on $4$-permutohedron, corresponding to the Dynamical Sorting Braids.}
\end{figure}

The DSB obtained from the sorting network $S_1$ encircles two hexagons (see Fig. \ref{fig:S1}). In other words, $S_1^*$ can be obtained from $S_1$ by two braid moves: $\bf 123212\mapsto 123121 \sim 121321\mapsto 212321$, where  $\sim$ denotes commutations and $\mapsto$ denotes braid moves. It follows from theorem \ref{thm:Bruhat2Mu} that there are $4$ signatures which provide a nontrivial unlinked DSB. Interestingly, each of these braids has $|\vec\mu_3|=2$. For example, $\bf{1\bar{2}32\bar{1}2}$ provides the pure braid with $|\mu_3(1,2,3)|=|\mu_3(2,3,4)|=1$ (see figure \ref{fig:last}). The expected $\mu_3$ invariant for a signed DSB based on $S_1$ is thus $1/2$.

\begin{figure}[!hbt]\centering
		\begin{tikzpicture}[scale=0.8]
		\braid[
		rotate=90,
		line width=3pt,
		floor command={%
			% rectangle (\floorex,\floorey);
			\draw (\floorsx,\floorsy) -- (\floorex,\floorsy);
		},
		style strands={1}{red},
		style strands={2}{blue},
		style strands={3}{green},
		style strands={4}{gray}
		] (braid) at (0,0) |s_1 s_3^{-1} s_2 s_1 s_3^{-1} s_2|s_1^{-1} s_3 s_2^{-1} s_1^{-1} s_3 s_2^{-1}|;
		\node[circle, draw, fill=white, inner sep=0 pt,at=(braid-4-e)] {\(4\)};
		\node[circle, draw, fill=white, inner sep=0 pt,at=(braid-4-s)] {\(4\)};
		\node[circle, draw, fill=white, inner sep=0 pt,at=(braid-3-e)] {\(3\)};
		\node[circle, draw, fill=white, inner sep=0 pt,at=(braid-3-s)] {\(3\)};
		\node[circle, draw, fill=white, inner sep=0 pt,at=(braid-2-e)] {\(2\)};
		\node[circle, draw, fill=white, inner sep=0 pt,at=(braid-2-s)] {\(2\)};
		\node[circle, draw, fill=white, inner sep=0 pt,at=(braid-1-e)] {\(1\)};
		\node[circle, draw, fill=white, inner sep=0 pt,at=(braid-1-s)] {\(1\)};
		\end{tikzpicture}
		\caption{$\dsb(S_3,(+-++-+))$ provides $|\mu_3(1,2,3)|=|\mu_3(1,3,4)|=1$.}\label{fig:last}
	\end{figure}

For both $S_2$ and $S_3$, the corresponding sorting loop encircles four hexagons (see Figs \ref{fig:S2} and \ref{fig:S3}). Theorem \ref{thm:Bruhat2Mu} ensures the existence of $16$ different signatures leading to the non-trivial braid. However, not all these signatures correspond to the dynamical sorting braid. Recall, that DSB definition implies symmetry conditions on the signatures on the first and second halves of the sorting loop. For instance, contrary to $64$ possible signatures providing unlinked braid in general setting, only $16$ of them provide unlinked DSB (see theorem \ref{thm:dynamic}). Out of these $16$ there are $8$ providing nontrivial unlinked DSB, with $|\vec\mu_3| = 2$. The expected $\mu_3$ invariant for these networks is $1$, so the expected invariant over all unlinked DSB is $5/8$.

\subsubsection{The case of $n = 5$}
$\sort(5)$ contains $768$ distinct sorting networks, each having $64$ signatures providing an unlinked DSB. Below, we present a distribution of the magnitudes of $\vec\mu_3$ averaged over all signatures for each network. These averaged quantities happen to belong to the set $\{0,1/2,1,3/2,2\}$ with majority being $0$. The expected $\mu_3$ invariant for $\sort(5)$ is $113/128$.
\begin{figure}[!hbt]
	\centering
	\includegraphics[width=0.9\textwidth]{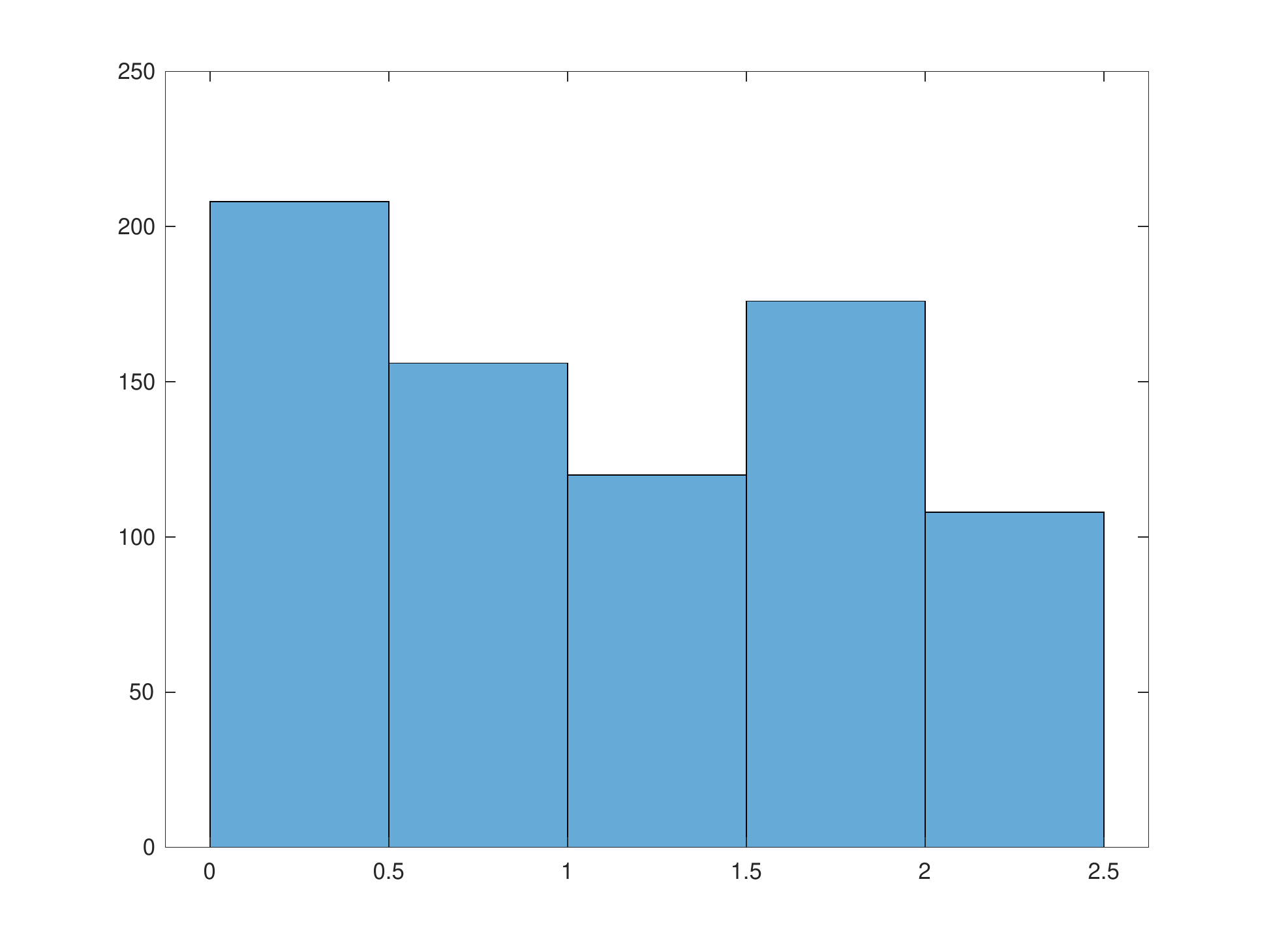}
	\caption{Distribution of the magnitudes of the third Milnor invariants among all $768$ unlinked DSB on $5$ strands.}
	\label{fig:my_label}
\end{figure}

 The experimental results lead us to numerous conjectures. In particular, it seems plausible that: \begin{itemize}
	\item Set of slim networks occupies non-zero measure in the set of all sorting networks, i.e. $\frac{|\mbox {slim}|}{|\sort(n)|}\to \varepsilon\ne 0$ as $n\to \infty$.
	\item Expected magnitude of the vector of Milnor invariants tends to some constant $\mathbb{E} |\mu_3|\to \mu<\infty$ as $\nLeftarrow\to \infty$.
\end{itemize}

\subsection*{Acknowledgements.} The authors are thankful to N. Stein for the beautiful COCALC environment where most of our numerical studies were conducted, as well as to N. Williams for deep and fruitful discussions. Part of work of C.K. was completed during his summer spent participating in Budapest Semesters in Mathematics.  
    \bibliographystyle{alpha}
    \bibliography{sorting}

\begin{thebibliography}{ADHV19}

\bibitem[ADHV19]{Virag2018}
Omer Angel, Duncan Dauvergne, Alexander~E. Holroyd, and B\'alint Vir\'ag.
\newblock The local limit of random sorting networks.
\newblock {\em Ann. Inst. H. Poincar\'e Probab. Statist.}, 55(1):412--440, 02
  2019.

\bibitem[AHRV07]{Romik2007}
Omer Angel, Alexander~E. Holroyd, Dan Romik, and B\'alint Vir\'ag.
\newblock Random sorting networks.
\newblock {\em Advances in Mathematics}, 215(2):839 -- 868, 2007.

\bibitem[Arn86]{Arnold}
V.~I. Arnold.
\newblock The asymptotic {H}opf invariant and its applications.
\newblock {\em Selecta Math. Soviet.}, 5(4):327--345, 1986.
\newblock Selected translations.

\bibitem[Bre08]{Brennier}
Yann Brenier.
\newblock Generalized solutions and hydrostatic approximation of the euler
  equations.
\newblock {\em Physica D: Nonlinear Phenomena}, 237(14):1982 -- 1988, 2008.
\newblock Euler Equations: 250 Years On.

\bibitem[MK99]{Braids}
Kunio Murasugi and Bohdan~I. Kurpita.
\newblock {\em A Study of Braids}.
\newblock Mathematics and Its Applications. Springer, 1999.

\bibitem[PV94]{PolyakViro}
Michael Polyak and Oleg Viro.
\newblock Gauss diagram formulas for vassiliev invariants.
\newblock {\em International Mathematics Research Notices}, 11, 1994.

\bibitem[RR13]{Reiner2013}
Victor Reiner and Yuval Roichman.
\newblock Diameter of graphs of reduced words and galleries.
\newblock {\em Transactions of the American Mathematical Society}, 365(5):2779
  -- 2802, 2013.

\bibitem[RVV16]{Virag2016}
Mustazee {Rahman}, Balint {Virag}, and Mate {Vizer}.
\newblock {Geometry of Permutation Limits}.
\newblock {\em arXiv e-prints}, page arXiv:1609.03891, Sep 2016.

\bibitem[Sta84]{Stanley1986}
Richard Stanley.
\newblock On the number of reduced decompositions of elements of coxeter
  groups.
\newblock {\em European Journal of Combinatorics}, 5(4):359--372, 1984.

\end{thebibliography}

\end{document}